\newtheorem{theorem}{\textbf{Theorem}}
\newtheorem{lemma}{\textbf{Lemma}}
\newtheorem{proposition}{\textbf{Proposition}}
\newtheorem{remark}{\textbf{Remark}}
\newcommand{\R}{\mathcal{R}}
\newcommand{\comment}[1]{}
\newcommand{\tcm}{\textcolor{magenta}}
\begin{document}

\begin{frontmatter}



\title{Sustainable vector/pest control using the permanent Sterile Insect Technique }

\author[label1]{R. Anguelov}
\ead{roumen.anguelov@up.ac.za}
\author[label1,label2,label3]{Y. Dumont}
\ead{yves.dumont@cirad.fr; yves.dumont@up.ac.za}
\author[label1]{I.V. Yatat Djeumen}
\ead{ivric.yatatdjeumen@up.ac.za}

\address[label1]{University of Pretoria, Department of Mathematics and Applied Mathematics, Pretoria, South Africa}
 \address[label2]{UMR AMAP, F-34398 Montpellier, France}
 \address[label3]{AMAP, Univ Montpellier, CIRAD, CNRS, INRA, IRD, Montpellier, France}

\begin{abstract}
Vector/Pest control is essential to reduce the risk of vector-borne diseases or losses in crop fields. Among biological control tools, the sterile insect technique (SIT), is the most promising one. SIT control generally consists of massive releases of sterile insects in the targeted area in order to reach elimination or to lower the pest population under a certain threshold. 
The models presented here are minimalistic with respect to the number of parameters and variables. The first model deals with the dynamics of the vector population while the second
model, the SIT model, tackles the interaction between treated males and wild female vectors. 
For the vector population model, the elimination equilibrium \textbf{0} is globally asymptotically stable when the basic offspring number,  $\mathcal{R}$, is lower or equal to one, whereas \textbf{0} becomes unstable and one stable positive equilibrium exists, with well determined basins of attraction, when $\mathcal{R}>1$. For
the SIT model, we obtain a threshold number of treated male vectors above which the control of wild female vectors is effective: the massive release control. When the amount of treated male vectors is lower than the aforementioned threshold number, the SIT model experiences a bistable situation involving the elimination equilibrium and a positive equilibrium.  However, practically, massive releases of sterile males are only possible for a short period of time. That is why, 
using the bistability property, we develop a new strategy to maintain the wild population under a certain threshold, for a permanent and sustainable low level of SIT control. We illustrate our theoretical results with numerical simulations, in the case of SIT mosquito control.
\end{abstract}

\begin{keyword}
Sterile Insect Technique \sep Vector \sep Pest \sep Monotone system.  
%
%
\end{keyword}

\end{frontmatter}



\section{Introduction}
In the last decades, the development of sustainable vector control methods has become one of the most challenging issue to reduce the impact of human vector borne diseases, like malaria, dengue, chikungunya or crop pests, like fruit flies.

Several control techniques have been developed or are under development. However, the process to reach field applications is long and complex. Modeling, and in particular Mathematical Modeling has become a useful tool in Human Epidemiology since the pioneering works of Sir R. Ross and his malaria model \cite{Ross1910, Ross1911}. Numerous models have been developed to understand the dynamics of diseases and pests to test "in silico" the usefulness or not of control strategies (and their combination).

In this paper, we focus on the Sterile Insect Technique (SIT). This is an old control techniques that have been used more or less successfully on the field against various kind of Pests or Vectors (see \cite{SIT} for various examples). The classical SIT consists of mass releases of males sterilized by ionizing radiation. The released sterile males transfer their sterile sperms to wild females, which results in a progressive decay of the targeted population. For mosquitoes, other sterilization techniques have been developed using either genetics (the RIDL technique) or bacteria (wolbachia) \cite{Sinkin2004}. For fruit flies, only ionizing radiation has been used, so far \cite{SIT}.

Our work is a companion paper of \cite{Strugarek2019}, where SIT against mosquitoes only has been considered. However, it is important to notice that the results obtained in \cite{Strugarek2019} can be used against crop Pest too. An important assumption in \cite{Strugarek2019} is that the insect population dynamics exhibit a strong Allee effect. Then, the application of SIT for an estimated finite time is sufficient to drive the population below the minimum survival density. However, for insect population the minimum survival density tends to be very close to extinction, that is an area of the domain, where deterministic modelling is not considered adequate. Hence, in this paper we do not make such assumption, but rather propose control which relies on Allee effect generated by the SIT control. Indeed, in previous works, e.g. \cite{Anguelov2012TIS, Dumont2012, Dumont2010}, it has been shown that even low levels of SIT control produce a tangible minimum survival density, below which the population declines to extinction.  In this setting, if we need to keep the insect population below certain level and/or to sustain the decay, the SIT cannot be discontinued. In this sense we talk about "permanent" SIT. The level of permanent SIT control is determined by available resources. Once this level is known, higher level of releases can be used in short term in order to bring the insect population density below the minimum survival level associated with the lower, but long-term sustainable SIT level of control.   The aim of this paper is to show the feasibility of this type of SIT control strategy as well as specific methods for calculating its essential parameters.

The outline of the paper is as follows. In the next section, we present a minimalistic entomological model
of wild insect population and the discussion of its global dynamical
properties. Section \ref{SIT-continuous} deals with the study of the
SIT mathematical model in the case of constant and continuous SIT
releases. The key finding is the identification of a threshold
number of sterile male vectors above which the control of the wild population is effective; that is, the wild population declines to extinction. Section \ref{caracterisation} is devoted to the
characterization of the minimal time necessary to reduce the amount
of wild vector population under a given threshold when using SIT
releases, that is, by considering the SIT model studied in section
\ref{SIT-continuous}. Section \ref{SIT-Pulse} deals with the study of the
SIT mathematical model in the case of periodic and impulsive SIT
releases. Notably, by using suitable comparison arguments, we
provide condition of reaching elimination of wild vector population
with periodic and pulse SIT releases and, characterize the minimal
time necessary to lower the wild vector population under a given
threshold in order to reduce the epidemiological risk. The
theoretical results are discussed and supported by numerical
simulations in section \ref{Numerical-simulations}. Concluding
remarks as to show how this work fits in the literature and can be
extended are given in section \ref{conclusion}. 

\section{A minimalistic entomological model}\label{SIT-minimalistic}
The model presented in this section is minimalistic in the sense that it uses smallest possible number of compartments which allows for adequate modelling of the mechanism of SIT control. It is simpler than the models in \cite{Anguelov2012TIS} and \cite{Dumont2010}. Nevertheless, and we will see in the sequel, it has the same asymptotic properties as the other mentioned models. The advantages of using this simpler model are two fold: On the one hand, while the model remains biologically accurate, it allows for a complete analysis to be carried out. On the other hand, it is more generic and can be applied to a variety of insect populations.  

The model is given as a system of ordinary differential equations as follows:
\begin{equation}\label{Mosquitoes-ode}
 \left\{%
\begin{array}{lcl}
 \displaystyle\frac{dA}{d t} &=& \phi F-(\gamma+\mu_{A,1}+\mu_{A,2}A)A,\\
    \displaystyle\frac{dM}{d t} &=& (1-r)\gamma A-\mu_MM,\\
    \displaystyle\frac{dF}{d t} &=& r\gamma A-\mu_FF,\\
\end{array}
\right.
\end{equation}
where the parameters and state variables are described in Table
\ref{table-description}. 

\begin{table}[H]
  \centering
  \begin{tabular}{|l|l|}
    \hline
    Symbol & Description \\
    \hline
    $A$ & Aquatic stage (gathering eggs, larvae, nymph stages) \\
    \hline
    $F$ & Fertilized and eggs-laying females \\
    \hline
    $M$ & Males \\
    \hline\hline
    $\phi$ & Number of eggs at each deposit per capita (per day) \\
    \hline
    $\gamma$ & Maturation rate from larvae to adult (per day) \\
    \hline
    $\mu_{A,1}$ & Density independent mortality rate of the aquatic stage (per day) \\
    \hline
    $\mu_{A,2}$ & Density dependent mortality rate of the aquatic stage (per day$\times$ number) \\
    \hline
    $r$ & Sex ratio \\
     \hline
    $1/\mu_F$ & Average lifespan of female (in days) \\
    \hline
    $1/\mu_M$ & Average lifespan of male (in days) \\
    \hline
  \end{tabular}
  \caption{Description of parameters and state variables of model (\ref{Mosquitoes-ode})}\label{table-description}
\end{table}
Contrary to \cite{Strugarek2019}, we assume a density-dependent mortality rate in the aquatic stage. This may correspond to an intra-specific competition between the larvae stages, for instance. However, the forthcoming methodology could be applied for a system where the non-linearity stands for the birth-rate, like in \cite{Strugarek2019,Dumont2012,DT2016}.

We set $x = (A,M,F)'$ and
$\mathcal{D}=\mathbb{R}^3_+=\{x\in\mathbb{R}^3:x\geq\textbf{0}\}$. Then model (\ref{Mosquitoes-ode}) can be written in the form 
\begin{equation}\label{ODE-system}
\frac{dx}{dt}=f(x),
\end{equation}
where $f:\mathbb{R}^3\to\mathbb{R}^3$ represents the right hand side of (\ref{Mosquitoes-ode}). Function $f$ is continuous and continuously differentiable on $\mathbb{R}^3$. Thus, according to \cite[Theorem III.10.VI]{Walter1998}, for any initial condition a unique solution exists, at least locally. The vector field defined by $f$ is either tangential or directed inwards on $\partial D$. Therefore, for any initial condition in $\mathcal{D}$ the solution of (\ref{ODE-system}) remains in $\mathcal{D}$ for its maximal interval of existence \cite[Theorem III.10.XVI]{Walter1998}. In the sequel we consider the vector population model in the form (\ref{Mosquitoes-ode}) or in the form (\ref{ODE-system}) on the domain $\mathcal{D}$. In order to obtain existence of the solutions in $\mathcal{D}$, it is sufficient to obtain a priori upper bounds. This can be done as follows. 

We observe that system (\ref{Mosquitoes-ode}) is monotone \cite[Proposition 3.1.1]{Smith2008}. Indeed, for any $x\in\mathcal{D}$ the Jacobian  
\begin{equation}\label{Jacobian}
J(x)=\left(
    \begin{array}{ccc}
      -(\gamma+\mu_{A,1})-2\mu_{A,2}A & 0 &\phi \\
      (1-r)\gamma & -\mu_M & 0\\
      r\gamma & 0 &-\mu_F \\
    \end{array}
  \right)
\end{equation}
is a Metzler matrix, i.e. all its off diagonal elements are non-negative. 
The inequality
\begin{equation}\label{inequality-ode}
r\gamma-\displaystyle\frac{\mu_F}{2\phi}(\gamma+\mu_{A,1}+\mu_{A,2}A)<0
\end{equation}
holds for all sufficiently large $A$. Let $m>0$ and let $A_m$ be so
large that in addition to (\ref{inequality-ode}) the following
inequalities also hold:
\begin{equation}\label{bm}
\begin{array}{cclc}
  A_m & \geq & m, &  \\
  F_m & := & \displaystyle\frac{(\gamma+\mu_{A,1}+\mu_{A,2}A_m)A_m}{2\phi} & \geq m, \\
  M_m & := & \displaystyle\frac{2(1-r)\gamma A_m}{\mu_M} & \geq m.
\end{array}
\end{equation}
For every $m>0$ let 
\begin{equation}\label{defbm}
b_m=(A_m,M_m,F_m)'
\end{equation}
be a vector with coordinates satisfying (\ref{inequality-ode}) and (\ref{bm}). 
Then
\begin{equation}\label{fbm}
   f(b_m)=\left(
           \begin{array}{c}
             -\phi F_m \\
             -(1-r)\gamma A_m \\
             A_m(r\gamma-\displaystyle\frac{\mu_F}{2\phi}(\gamma+\mu_{A,1}+\mu_{A,2}A_m)) \\
           \end{array}
         \right)<\textbf{0}.
\end{equation}
Using \cite[Proposition 3.2.1]{Smith2008}, the solution initiated at $b_m$ is decreasing. Then, using again the monotonicity of the system, see \cite[Proposition 3.2.1]{Smith2008}, for any solution of (\ref{Mosquitoes-ode}) initiated in $\mathcal{D}$ we have 
\begin{equation}\label{upperbound}
x(t)\leq b_{||x(0)||_\infty}.
\end{equation}
The a priori upper bound given in (\ref{upperbound}) provides for existence of the solution for all $t\geq 0$. Therefore, (\ref{Mosquitoes-ode}) defines a dynamical system on $\mathcal{D}$.
\par

The stability properties of the extinction equilibrium $\textbf{0}=(0,0,0)'$ are usually described in terms of the basic offspring number $\R$ of the population, i.e. the self-reproduction of an individual (number of females produced by a single female) during its lifetime, assuming that the population is so small that the density dependent mortality can be ignored. The basic offspring number related to model (\ref{Mosquitoes-ode}) is defined as follows
\begin{equation}
\R=\displaystyle\frac{r\gamma\phi}{\mu_F(\gamma+\mu_{A,1})}.
\label{BO}
\end{equation}
The Jacobian of system (\ref{Mosquitoes-ode}) computed at the extinction equilibrium is 
\begin{equation}\label{JacobianZero}
J(\textbf{0})=\left(
    \begin{array}{ccc}
      -(\gamma+\mu_{A,1}) & 0 &\phi \\
      (1-r)\gamma & -\mu_M & 0\\
      r\gamma & 0 &-\mu_F \\
    \end{array}
  \right).    
\end{equation}
Its eigenvalues are $-\mu_M$ and the roots of the equation
\begin{equation}\label{charequation}
    \lambda^2+(\gamma+\mu_{A,1}+\mu_F)\lambda+(\gamma+\mu_{A,1})\mu_F(1-\R)=0.
\end{equation}
It is easy to see that if $\R<1$, all eigenvalues of $J(\textbf{0})$ are either negative or have negative real parts, that is $\textbf{0}$ is asymptotically stable. If $\R>1$, the Jacobian has two negative eigenvalues and a positive one. Hence, $\textbf{0}$ is unstable.

The existence of an endemic equilibrium also depends on the value of $\R$. Setting the right hand side of (\ref{Mosquitoes-ode}) to zero we obtain the equilibrium $\textbf{0}$ and the equilibrium $E^*=(A^*,M^*,F^*)'$ given by 
\begin{equation}\label{Wild-equilibria-definition}
 \left\{%
\begin{array}{rcl}
A^*&=& \displaystyle\frac{(\gamma+\mu_{A,1})}{\mu_{A,2}}(\R-1),\\
 M^*&=& \displaystyle\frac{(1-r)\gamma A^*}{\mu_M},\\
 F^*&=& \displaystyle\frac{r\gamma A^*}{\mu_F}.\\
\end{array}
\right.
\end{equation}
Clearly, $E^*\in\mathcal{D}$ and $E^*\neq\textbf{0}$ if and only if $\R>1$. 
We summarize these results with some more details related to basins of attraction of equilibria in the following theorem. 

\begin{theorem}\label{Mosquitoes-ode-theorem}
Model (\ref{Mosquitoes-ode}) defines a forward dynamical system on 
$\mathcal{D}$. Furthermore, 
\begin{enumerate}
    \item[1)] If $\R\leq1$ then $\bf{0}$ is globally asymptotically stable on
    $\mathcal{D}$.
    \item[2)] If $\R>1$ then $E^*$ is stable with
    basin of attraction $$\mathcal{D}\setminus\{x=(A,M,F)'\in\mathbb{R}^3_+:A=F=0\},$$  and $\bf{0}$ is unstable with the non negative $M-$axis being a stable manifold.
\end{enumerate}
\end{theorem}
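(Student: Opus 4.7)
The plan is to exploit the monotonicity of (\ref{Mosquitoes-ode}) via the Metzler Jacobian (\ref{Jacobian}) together with the super-solutions $b_m$ from (\ref{defbm})-(\ref{fbm}) and a complementary positive sub-equilibrium at the origin, pinching every trajectory between two monotone orbits whose limits are known explicitly. I denote by $x(t;x_0)$ the solution of (\ref{ODE-system}) with initial condition $x_0$.

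For $\R\le 1$, the formulas (\ref{Wild-equilibria-definition}) show that $\textbf{0}$ is the only equilibrium in $\mathcal{D}$. Given $x_0\in\mathcal{D}$, I pick $m$ so large that $\textbf{0}\le x_0\le b_m$. By (\ref{fbm}) and Proposition 3.2.1 of \cite{Smith2008}, $t\mapsto x(t;b_m)$ is non-increasing in $t$; being bounded below by $\textbf{0}$, it converges to the only available equilibrium, $\textbf{0}$. Monotonicity then gives $\textbf{0}\le x(t;x_0)\le x(t;b_m)$, so $x(t;x_0)\to\textbf{0}$ by the squeeze argument. Combined with the local asymptotic stability already read off from (\ref{charequation}), this proves statement (1).

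For $\R>1$, invariance of the non-negative $M$-axis is immediate: $A=F=0$ forces $\dot A=\dot F=0$ while $\dot M=-\mu_M M$, so orbits starting on the axis decay exponentially to $\textbf{0}$, placing the axis inside the stable manifold of $\textbf{0}$. To attract every other initial datum to $E^*$, I construct a strictly positive sub-equilibrium. Since $J(\textbf{0})$ is Metzler and (\ref{charequation}) has a positive root $\lambda^+$, direct resolution of $J(\textbf{0})v=\lambda^+v$ shows that any non-trivial eigenvector has all three coordinates strictly positive; pick one such $v^*$. For small $\varepsilon>0$, $f(\varepsilon v^*)=\varepsilon\lambda^+ v^*+O(\varepsilon^2)>\textbf{0}$, so Proposition 3.2.1 of \cite{Smith2008} makes $x(t;\varepsilon v^*)$ non-decreasing; bounded above by any $b_m$ with $b_m\ge\varepsilon v^*$, it converges to an equilibrium $\ge\varepsilon v^*$, which by (\ref{Wild-equilibria-definition}) is $E^*$. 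Symmetrically, for $m$ so large that $b_m\ge E^*$, $x(t;b_m)$ is non-increasing and bounded below by $E^*$, hence also converges to $E^*$. Finally, for any $x_0$ with $A(0)>0$ or $F(0)>0$, the sign structure of $f$ forces all three components of $x(t_0;x_0)$ to be strictly positive for every $t_0>0$: $F(0)>0$ gives $\dot A(0)>0$, the symmetric implication holds if $A(0)>0$, and once $A>0$ then $\dot M>0$. Picking $\varepsilon$ small and $m$ large so that $\varepsilon v^*\le x(t_0;x_0)\le b_m$, monotonicity sandwiches the forward orbit between $x(\cdot;\varepsilon v^*)$ and $x(\cdot;b_m)$, both converging to $E^*$, hence $x(t;x_0)\to E^*$. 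Lyapunov stability of $E^*$ follows from inspecting $J(E^*)$: using $\mu_{A,2}A^*=(\gamma+\mu_{A,1})(\R-1)$, the non-trivial factor of its characteristic polynomial is $\lambda^2+[(\gamma+\mu_{A,1})(2\R-1)+\mu_F]\lambda+(\gamma+\mu_{A,1})\mu_F(\R-1)$, whose coefficients are positive when $\R>1$, yielding two roots with negative real parts; together with the third eigenvalue $-\mu_M$, this gives local asymptotic stability.

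The main obstacle is confirming at the sub-equilibrium step that the monotone orbit from $\varepsilon v^*$ converges to $E^*$ rather than to some smaller positive equilibrium, and symmetrically that the orbit from $b_m$ cannot halt above $E^*$; both are ruled out by the explicit enumeration of equilibria in (\ref{Wild-equilibria-definition}), which leaves only $\textbf{0}$ and $E^*$ in $\mathcal{D}$. A secondary subtlety is the strict positivity of $v^*$, which I verify by a direct computation from $J(\textbf{0})v^*=\lambda^+v^*$ rather than invoking general Perron-Frobenius machinery.
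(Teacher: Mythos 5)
Your proof is correct; part 1) is essentially identical to the paper's argument (squeeze between $\textbf{0}$ and the decreasing orbit started at $b_{\|x(0)\|_\infty}$). For part 2) you take a genuinely different and more elementary route. The paper copes with the reducibility of the Jacobian (\ref{Jacobian}) (the $M$-equation decouples) by passing to the irreducible $(A,F)$ subsystem (\ref{Mosquitoes-odeAF}) and invoking the order-interval dichotomy for strongly monotone systems, \cite[Theorem 2.2.2]{Smith2008}, on $[\textbf{0},(A^*,F^*)']$, ruling out collapse to the origin via the positive eigenvector of the linearization there; asymptotic stability of $E^*$ is then read off from the monotone convergence from below and above. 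You instead manufacture an explicit strict sub-equilibrium $\varepsilon v^*$ along the unstable eigendirection of $J(\textbf{0})$ --- and your verification that $f(\varepsilon v^*)>\textbf{0}$ is sound, since the only nonlinearity is $-\mu_{A,2}A^2$ in the first component while every component of $\lambda^+v^*$ is strictly positive --- so that only the elementary convergence criterion \cite[Proposition 3.2.1]{Smith2008} is needed from below and from above, with both limits identified as $E^*$ through the enumeration of equilibria in (\ref{Wild-equilibria-definition}). This bypasses the strongly monotone machinery entirely, at the cost of (i) the positivity lemma that any orbit off the $M$-axis becomes strictly positive (your step ``once $A>0$ then $\dot M>0$'' holds only while $M<(1-r)\gamma A/\mu_M$, but the intended conclusion that $M$ becomes and stays positive is correct), which is needed to slide $\varepsilon v^*$ under the trajectory, and (ii) an explicit computation at $J(E^*)$, which checks out (for the nontrivial $2\times2$ block the trace is $-(\gamma+\mu_{A,1})(2\R-1)-\mu_F<0$ and the determinant is $(\gamma+\mu_{A,1})\mu_F(\R-1)>0$), giving Lyapunov stability by linearization rather than by monotonicity. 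Both routes are complete; yours is more self-contained, the paper's is shorter where it can lean on the theory of strongly monotone flows.
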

\begin{proof} As mentioned, it remains to prove the statements regarding the basins of attraction. We use an approach similar to the approach in \cite{Anguelov2019} for the analysis of bi-stable monotone systems. 
1) Let $\R\leq 1$. Let $x=x(t)$ be any solution initiated in $\mathcal{D}$. Denote by $y=y(t)$ the solution of (\ref{Mosquitoes-ode}) with initial condition $y(0)=b_{||x(0)||_{\infty}}$. It follows from the inequality (\ref{fbm}) that the function $y$ is decreasing and, therefore, it converges. The limit is necessarily an equilibrium (see also \cite[page 35]{Smith2008}). Considering that there is only one equilibrium in $\mathcal{D}$, we conclude that $\lim\limits_{t\rightarrow+\infty} y(t)=\textbf{0}$. Using that (\ref{Mosquitoes-ode}) is a monotone system, the inequalities $\textbf{0}\leq x(0)\leq b_{||x(0)||_{\infty}}$, we have
$$
\textbf{0}\leq x(t)\leq y(t), \ \ t\geq 0.
$$
Therefore, $\lim\limits_{t\rightarrow+\infty} x(t)=\textbf{0}$, which proves the global asymptotic stability of $\textbf{0}$ on $\mathcal{D}$.

2) To prove the stability and basin of attraction we use  \cite[Theorem 2.2.2]{Smith2008}. This theorem applies to strongly monotone systems. We recall that if the Jacobian of $f$ is a Metzler irreducible matrix for every $x\in\mathcal{D}$, then (\ref{ODE-system}) is strongly monotone \cite[Theorem 4.1.1]{Smith2008}. The Jacobian (\ref{Jacobian}) associated with (\ref{Mosquitoes-ode}) is not irreducible, since the equation for $M$ can be decoupled. We consider the subsystem for $A$ and $F$, that is,
\begin{equation}\label{Mosquitoes-odeAF}
 \left\{%
\begin{array}{lcl}
 \displaystyle\frac{dA}{d t} &=& \phi F-(\gamma+\mu_{A,1}+\mu_{A,2}A)A,\\
\displaystyle\frac{dF}{d t} &=& r\gamma A-\mu_FF,\\
\end{array}
\right.
\end{equation}
which defines a dynamical system on $\mathbb{R}^2_+$. The Jacobian 
\begin{equation}\label{JacobianAF}
\tilde{J}(A,F)=\left(
    \begin{array}{cc}
      -(\gamma+\mu_{A,1})-2\mu_{A,2}A & \phi \\
       r\gamma  &-\mu_F \\
    \end{array}
  \right)
\end{equation}
is clearly irreducible. We apply \cite[Theorem 2.2.2]{Smith2008} to the two dimensional interval 
$$
\{(A,F)'\in\mathbb{R}^2_+:0\leq A\leq A^*,\,0\leq F\leq F^*\}.
$$
It follows that, all solutions initiated in this interval, excluding the end points, converge either all to $(0,0)'$ or all to $(A^*,F^*)'$. The characteristic equation of $\tilde{J}(0,0)$ is exactly (\ref{charequation}), which produces one positive and one negative root. Considering that $\tilde{J}(0,0)$ is a Metzler matrix, it has a strictly positive eigenvector corresponding to the positive eigenvalue. Hence, it is not possible that all solutions converge to $(0,0)'$. Therefore, they all converge to $(A^*,F^*)'$. The implication for the three dimensional system (\ref{Mosquitoes-ode}) is that all solutions initiated in the interval $[\textbf{0},E^*]$, excluding the $M$-axis, converge to $E^*$. 

Using similar argument as in 1), any solution initiated at a point larger than $E^*$ converges to $E^*$. Since any point in $\mathcal{D}\setminus\{x=(A,M,F)'\in\mathbb{R}^3_+:A=F=0\}$ can be placed between a point below $E^*$, but not on $M$-axis and a point above $E^*$, all solutions initiated in $\mathcal{D}\setminus\{x=(A,M,F)'\in\mathbb{R}^3_+:A=F=0\}$ converge to $E^*$. The monotone convergence of the solutions initiated below and above $E^*$ implies the asymptotic stability of $E^*$ as well. The basin of attraction cannot be extended further, since the nonnegative $M$-axis is the attractive manifold corresponding to the eigenvalue $-\mu_M$ of $J(\textbf{0})$. 
\end{proof}

\section{The SIT model in the case of constant and continuous releases}\label{SIT-continuous}
 In the sequel, we assume that $\R>1$.
We take into account the constant release of sterile male vectors
$M_T$ by adding to model (\ref{Mosquitoes-ode}) an equation for
$M_T$.  Altogether, the SIT model reads as
\begin{equation}\label{SIT-ode-original}
 \left\{%
\begin{array}{lcl}
 \displaystyle\frac{dA}{d t} &=& \phi F-(\gamma+\mu_{A,1}+\mu_{A,2}A)A,\\
     \displaystyle\frac{dM}{d t} &=& (1-r)\gamma A-\mu_MM,\\
    \displaystyle\frac{dF}{d t} &=& \displaystyle\frac{ M}{M+M_T}r\gamma A-\mu_FF,\\
    \displaystyle\frac{dM_T}{d t} &=& \Lambda-\mu_TM_T.\\
\end{array}
\right.
\end{equation}
The quantity $\Lambda$ is the number of sterile insects released per
unit of time. Assuming $t$ large enough, we may assume that $M_T(t)$ has reached its equilibrium value $M_T^*:=\Lambda/\mu_T$. 
Thus, model (\ref{SIT-ode-original}) reduces to
\begin{equation}\label{SIT-ode}
 \left\{%
\begin{array}{lcl}
 \displaystyle\frac{dA}{d t} &=& \phi F-(\gamma+\mu_{A,1}+\mu_{A,2}A)A,\\
     \displaystyle\frac{dM}{d t} &=& (1-r)\gamma A-\mu_MM,\\
    \displaystyle\frac{dF}{d t} &=& \displaystyle\frac{ M}{M+M_T^*}r\gamma A-\mu_FF,\\
\end{array}
\right.
\end{equation}
where parameters and state variables are described in Table
\ref{table-description}. Model (\ref{SIT-ode}) defines a monotone dynamical system on $\mathcal{D}$.

\subsection{Equilibria of the SIT model (\ref{SIT-ode}): existence and stability}
  Equilibria of the SIT model (\ref{SIT-ode}) are obtained by
solving the system
\begin{equation}\label{ODE-TIS-equilibre}
\left\{%
\begin{array}{rcl}
  \phi F-(\gamma+\mu_{A,1}+\mu_{A,2}A)A &=&0,\\
 (1-r)\gamma A-\mu_MM &=&0,\\
  \displaystyle\frac{ M}{M+M_T^*}r\gamma A-\mu_FF &=&0.\\
\end{array}
\right.
\end{equation}
From (\ref{ODE-TIS-equilibre})$_1$ and (\ref{ODE-TIS-equilibre})$_2$
we have
\begin{equation}\label{ODE-TIS-A}
   A=\displaystyle\frac{\mu_M}{(1-r)\gamma}M
\end{equation}
 and
 \begin{equation}\label{ODE-TIS-F}
    F=\displaystyle\frac{(\gamma+\mu_{A,1}+\mu_{A,2}A)A}{\phi}=\displaystyle\frac{(\gamma+\mu_{A,1})}{\phi}\frac{\mu_M}{(1-r)\gamma}M+\displaystyle\frac{\mu_{A,2}}{\phi}\left(\displaystyle\frac{\mu_M}{(1-r)\gamma}M\right)^2.
 \end{equation}
 Substituting in (\ref{ODE-TIS-equilibre})$_3$ leads to $M=0$ or

 \begin{equation}\label{dagg}
   \displaystyle\frac{ r\gamma M}{M+M_T^*}-\frac{\mu_F(\gamma+\mu_{A,1})}{\phi}-\frac{\mu_F\mu_{A,2}}{\phi}\frac{\mu_M}{(1-r)\gamma}M=0.
 \end{equation}
Let us set $\alpha=M_T/M$ then in term of $\alpha$, equation
(\ref{dagg}) can be written as 
\begin{equation}\label{dagg2}
    \alpha^2-a_1\alpha+a_0=0,
\end{equation}
 where
$$
\begin{array}{ccl}
  a_1 & = & \displaystyle\frac{r\gamma\phi}{\mu_F(\gamma+\mu_{A,1})}-1-\displaystyle\frac{\mu_{A,2}\mu_M}{(\gamma+\mu_{A,1})(1-r)\gamma}M_T^*, \\
  a_0 & = & \displaystyle\frac{\mu_{A,2}\mu_M}{(\gamma+\mu_{A,1})(1-r)\gamma}M_T^*.
\end{array}
$$
Setting
  $Q = \displaystyle\frac{\mu_{A,2}\mu_M}{(\gamma+\mu_{A,1})(1-r)\gamma}$,
 (\ref{dagg2}) assumes the form

\begin{equation}\label{dagg3}
    \alpha^2-(\R-1-QM_T^*)\alpha+QM_T^*=0.
\end{equation}
The discriminant of (\ref{dagg3}) is
$$
\Delta (M_T^*)= ((\sqrt{\R}-1)^2-M_T^*Q)((\sqrt{\R}+1)^2-M_T^*Q).
$$
The equation $\Delta (M_T^*)=0$ has two positive solutions $M_{T_1}$ and
$M_{T_2}$:

\begin{equation}\label{MT1}
    M_{T_{1}} = \displaystyle\frac{(\sqrt{\R}-1)^2}{Q}, \quad M_{T_{2}} = \displaystyle\frac{(\sqrt{\R}+1)^2}{Q}.
\end{equation}
 Then, we have several possible cases to study:
\begin{itemize}
    \item When $M_T^*<M_{T_1}$, $\Delta(M_T^*)>0$ and
    (\ref{dagg3}) has two positive solutions $\alpha_+$ and $\alpha_-$ 

\begin{equation}
    \label{alpha}
    \alpha_{\pm}=\displaystyle\frac{(\R-1-QM_T^*)\pm\sqrt{(\R+1-QM_T^*)^2-4R}}{2}>0,
\end{equation}    
    because $\R-1-QM_T^*>\R-1-QM_{T_1}=2(\sqrt{\R}-1)>0$.
    \item When $M_T^*=M_{T_1}$ then $\Delta(M_T^*)=0$ and
    (\ref{dagg3}) has only one real solution $\alpha_\dag$ such that
\begin{equation} \label{alpha_dag}
    \alpha_{\dag}=\displaystyle\frac{(\R-1-QM_T^*)}{2}>0.
\end{equation}    
\item When $M_T^*\geq M_{T_2}$ then $\Delta(M_T)\geq 0$ and
    (\ref{dagg3}) has one or two real roots which are negative 
    because $\R-1-QM_T^*\leq\R-1-QM_{T_2}=-2(1+\sqrt{\R})<0$.
\end{itemize}
These results can be summarized as follows.
\begin{proposition}\label{SIT-ode-equilibre} Let $M_{T_{1}}$ given by (\ref{MT1}).
\begin{enumerate}
    \item If $M_T^*\in(0;M_{T_1})$ then model (\ref{SIT-ode}) has
    two positive equilibria $E_{1,2}=(A_{1,2},M_{1,2},F_{1,2})'$
    with $E_1<E_2$ and
    $$
\begin{array}{ccl}
  A_{1,2} & = & \displaystyle\frac{\mu_M}{(1-r)\gamma}M_{1,2}, \\
  F_{1,2} & = & \displaystyle\frac{(\gamma+\mu_{A,1}+\mu_{A,2}A_{1,2})A_{1,2}}{\phi}, \\
  M_{1} & = & \displaystyle\frac{M_T}{\alpha_{+}},\\
  M_{2} & = & \displaystyle\frac{M_T}{\alpha_{-}},
\end{array}
    $$
    where $\alpha_{\pm}$ is given in (\ref{alpha}).
    \item If $M_T^*=M_{T_1}$ then model (\ref{SIT-ode}) has
    a positive equilibrium $E_{\dag}=(A_{\dag},M_{\dag},F_{\dag})'$
    where
    $$
\begin{array}{ccl}
  A_{\dag} & = & \displaystyle\frac{\mu_M}{(1-r)\gamma}M_{\dag}, \\
  F_{\dag} & = & \displaystyle\frac{(\gamma+\mu_{A,1}+\mu_{A,2}A_{\dag})A_{\dag}}{\phi}, \\
  M_{\dag} & = & \displaystyle\frac{M_T}{\alpha_\dag},
\end{array}
    $$
    where $\alpha_{\dag}$ is given by (\ref{alpha_dag}).
    \item If $M_T^*>M_{T_1}$ then model (\ref{SIT-ode}) has no positive equilibria.
\end{enumerate}
\end{proposition}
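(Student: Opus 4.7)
The plan is to reduce the count of positive equilibria of (\ref{SIT-ode}) to a count of positive roots of the quadratic (\ref{dagg3}), and then to run a three-case analysis using the discriminant together with the Vieta relations.

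First I would record that the manipulations already displayed between (\ref{ODE-TIS-equilibre}) and (\ref{dagg3}) in fact establish a bijection between positive equilibria of (\ref{SIT-ode}) and positive real roots of $\alpha^2 - (\R - 1 - QM_T^*)\alpha + QM_T^* = 0$: starting from a positive $\alpha$, set $M = M_T^*/\alpha$, then read $A$ from (\ref{ODE-TIS-A}) and $F$ from (\ref{ODE-TIS-F}), both of which are strictly increasing in $M$ on $(0,\infty)$; conversely, any positive equilibrium produces $\alpha = M_T^*/M > 0$ satisfying (\ref{dagg3}). Counting positive equilibria of the SIT model thus amounts to counting positive real roots of this quadratic.

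Then I would split into cases using the factorisation $\Delta(M_T^*) = ((\sqrt{\R}-1)^2 - QM_T^*)((\sqrt{\R}+1)^2 - QM_T^*)$ together with the Vieta relations, i.e.\ sum of roots $S = \R - 1 - QM_T^*$ and product $P = QM_T^* > 0$. Because $P > 0$, the real roots always share a common sign, dictated by the sign of $S$. For $M_T^* < M_{T_1}$, $\Delta > 0$ and $S > 2(\sqrt{\R}-1) > 0$, so (\ref{dagg3}) has two distinct positive roots $\alpha_\pm$ as in (\ref{alpha}); the map $\alpha \mapsto M_T^*/\alpha$ being order-reversing, $\alpha_+ > \alpha_-$ translates into $M_1 < M_2$, and strict monotonicity in $M$ of the expressions in (\ref{ODE-TIS-A}) and (\ref{ODE-TIS-F}) then gives $E_1 < E_2$. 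For $M_T^* = M_{T_1}$, $\Delta = 0$ and $S = 2(\sqrt{\R}-1) > 0$ produce the unique positive double root $\alpha_\dag$ of (\ref{alpha_dag}) and thence the single equilibrium $E_\dag$. For $M_T^* > M_{T_1}$ two sub-cases arise: either $M_T^* \in (M_{T_1}, M_{T_2})$, in which case $\Delta < 0$ and (\ref{dagg3}) has no real roots at all, or $M_T^* \geq M_{T_2}$, in which case $\Delta \geq 0$ but $S \leq -2(\sqrt{\R}+1) < 0$, and with $P > 0$ both real roots must be negative. In either sub-case there is no positive $\alpha$ and hence no positive equilibrium.

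There is no real obstacle to this argument, since all the heavy lifting — deriving (\ref{dagg3}), the factorisation of $\Delta$, the thresholds (\ref{MT1}), and the closed forms (\ref{alpha}), (\ref{alpha_dag}) — is already in place before the statement. The points requiring care are (i) setting up the bijection between positive $\alpha$ and positive equilibria precisely enough that the ordering $E_1 < E_2$ in Case 1 falls out with the correct orientation (remembering that $\alpha \mapsto M$ reverses order), and (ii) invoking $\R > 1$ at exactly the right moment to ensure $\sqrt{\R} - 1 > 0$, which is what keeps the real roots positive in Cases 1 and 2 but forces them negative in the second sub-case of Case 3.
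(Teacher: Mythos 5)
Your proposal follows essentially the same route as the paper: reduce the equilibrium system to the quadratic (\ref{dagg3}) in $\alpha = M_T^*/M$ and enumerate cases via the factorised discriminant $\Delta(M_T^*)$ and the sign of $\R-1-QM_T^*$. The only differences are that you explicitly treat the sub-case $M_{T_1}<M_T^*<M_{T_2}$ (where $\Delta<0$, hence no real roots), which the paper's case list passes over in silence, and that you spell out the order-reversal of $\alpha\mapsto M_T^*/\alpha$ to get $E_1<E_2$; both are harmless refinements of the same argument.
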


Before going further, let us make the following remark about the graphical analysis that also leads to a similar result as in Proposition \ref{SIT-ode-equilibre}. 
\begin{remark}\label{graphical-method}
Let us consider the following functions of $M$, defined on $\mathbb{R}^+$ by
$$
\begin{array}{l}
f_1(M)=\displaystyle\frac{M}{M+M_T^*},\\
f_2(M)=\displaystyle\frac{\mu_F(\gamma+\mu_{A,1})}{r\gamma\phi}+\frac{\mu_F\mu_{A,2}}{r\phi}\frac{\mu_M}{(1-r)\gamma^2}M.
\end{array}
$$
Hence, solve (\ref{dagg}) is equivalent to solve 
\begin{equation}\label{dagg-graphical}
    f_1(M)=f_2(M).
\end{equation}
Graphical analysis lead to the following three cases.
\begin{enumerate}
    \item Equation (\ref{dagg-graphical}) has zero solution. That is, the SIT model (\ref{SIT-ode}) does not have positive equilibrium.
    \item Equation (\ref{dagg-graphical}) has two positive solutions $M_1$ and $M_2$ with $M_1<M_2$. In that case, the SIT model (\ref{SIT-ode}) has two positive equilibria. In addition, by a direct comparison of the slopes of functions $f_1$ and $f_2$ at $M_{1,2}$ one deduces that:
\begin{equation}\label{slope-M1}
    \displaystyle\frac{M_T^*}{(M_1+M_T^*)^2}-\frac{\mu_F\mu_{A,2}}{r\phi}\frac{\mu_M}{(1-r)\gamma^2}>0
\end{equation}

and 

\begin{equation}\label{slope-M2}
    \displaystyle\frac{M_T^*}{(M_2+M_T^*)^2}-\frac{\mu_F\mu_{A,2}}{r\phi}\frac{\mu_M}{(1-r)\gamma^2}<0.
\end{equation}

\item Equation (\ref{dagg-graphical}) has one positive solution $M_\dag$ and, therefore, the SIT model (\ref{SIT-ode}) has also one positive equilibrium. In addition, it holds that
\begin{equation}\label{slope-Mdouble}
    \displaystyle\frac{M_T^*}{(M_\dag+M_T^*)^2}-\frac{\mu_F\mu_{A,2}}{r\phi}\frac{\mu_M}{(1-r)\gamma^2}=0.
\end{equation}
\end{enumerate}
\end{remark}
Remark \ref{graphical-method} will be helpful in the sequel.

\noindent The stability analysis is summarized in the following
\begin{theorem}
\label{SIT-ode-theorem}
System (\ref{SIT-ode}) defines a forward dynamical system on $\mathcal{D}$ for
any $M_T\in(0;+\infty)$. Moreover,
\begin{itemize}
    \item[(1)] If $M_T^*>M_{T_1}$ then equilibrium $\bf{0}$ is
    globally asymptotically stable on $\mathcal{D}$.
    \item[(2)] If $M_T^*=M_{T_1}$ then system (\ref{SIT-ode}) has two
    equilibria \textbf{\emph{0}} and $E_\dag$ with $\bf{0}<E_\dag$. The set $\{x\in \mathbb{R}^3:\textbf{\emph{0}}\leq
    x< E_\dag\}$ is in the basin of attraction of $\bf{0}$, while the set $\{x\in \mathbb{R}^3: x\geq E_\dag\}$ is in the basin of attraction of $E_\dag$.
    \item[(3)] If  $0<M_T^*<M_{T_1}$ then system (\ref{SIT-ode}) has three
    equilibria \textbf{\emph{0}}, $E_1$ and $E_2$ with $\bf{0}<E_1<E_2$. The set $\{x\in \mathbb{R}^3:\textbf{\emph{0}}\leq
    x< E_1\}$ is in the basin of attraction of \textbf{\emph{0}} while the set $\{x\in \mathbb{R}^3: x> E_1\}$ is in the basin of attraction
    of $E_2$.
\end{itemize}
\end{theorem}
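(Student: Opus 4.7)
The plan is to mirror the structure of the proof of Theorem \ref{Mosquitoes-ode-theorem}, combining monotonicity, \cite[Theorem 2.2.2]{Smith2008} on ordered intervals between equilibria, and the slope information from Remark \ref{graphical-method} to identify which endpoint attracts each interval. First I would verify that (\ref{SIT-ode}) defines a monotone forward dynamical system on $\mathcal{D}$. The Jacobian is Metzler because $M\mapsto M/(M+M_T^*)$ is non-decreasing (so $\partial F'/\partial M=r\gamma A\, M_T^*/(M+M_T^*)^2\geq 0$), and since the factor $M/(M+M_T^*)\in[0,1]$ only strengthens the inequality (\ref{fbm}), the same family $b_m$ from (\ref{defbm}) still satisfies $f(b_m)<\mathbf{0}$ and provides the a priori upper bound (\ref{upperbound}), hence forward completeness. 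A crucial difference from (\ref{Mosquitoes-ode}) is that $M$ now re-enters the $F$-equation, rendering the Jacobian irreducible on the interior of $\mathcal{D}$; by \cite[Theorem 4.1.1]{Smith2008} the semiflow is strongly monotone there, which licenses direct application of \cite[Theorem 2.2.2]{Smith2008} to the full 3D system on any ordered interval bounded by equilibria.

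For Case (1), Proposition \ref{SIT-ode-equilibre} leaves $\mathbf{0}$ as the only equilibrium, and the Case~1 argument of Theorem \ref{Mosquitoes-ode-theorem} transfers directly: the orbit from $b_{\|x(0)\|_\infty}$ is decreasing, bounded, and hence converges to the unique equilibrium $\mathbf{0}$, squeezing $x(t)$ down with it. For Case (3), I would apply \cite[Theorem 2.2.2]{Smith2008} to the two ordered intervals $[\mathbf{0},E_1]$ and $[E_1,E_2]$: each contains no equilibrium other than its endpoints, so every interior orbit converges to one of them. The slope inequality (\ref{slope-M1}) makes the Jacobian at $E_1$ carry a strictly positive Perron eigenvalue $\lambda$ with strictly positive eigenvector $v$ (via the irreducible Metzler structure), so a first-order Taylor expansion gives $f(E_1-\delta v)=-\delta\lambda v+O(\delta^2)<\mathbf{0}$ for small $\delta>0$; the orbit from $E_1-\delta v$ is therefore decreasing, stays bounded away from $E_1$, and must converge to $\mathbf{0}$, placing $[\mathbf{0},E_1)$ in the basin of $\mathbf{0}$. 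Symmetrically, (\ref{slope-M2}) makes the Jacobian at $E_2$ have only eigenvalues with negative real parts, so $(E_1,E_2]$ is in the basin of $E_2$. Points above $E_2$ are handled by sandwiching between a comparator in $(E_1,E_2]$ and the decreasing orbit from $b_{\|x(0)\|_\infty}$, which must converge to the smallest equilibrium it dominates, namely $E_2$.

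Case (2) is the main obstacle, since (\ref{slope-Mdouble}) makes the linearization at $E_\dag$ non-hyperbolic (zero Perron eigenvalue) and the first-order repulsion argument used at $E_1$ no longer applies verbatim. I would overcome this via Remark \ref{graphical-method}: the strict inequality $f_1(M)<f_2(M)$ for every $M\in(0,\infty)\setminus\{M_\dag\}$ encodes the fold structure and yields, at second order along the one-dimensional center direction, $f(E_\dag-\delta v)<\mathbf{0}$ for small $\delta>0$; this rules out $E_\dag$ as an $\omega$-limit of any orbit strictly below $E_\dag$, and \cite[Theorem 2.2.2]{Smith2008} applied to $[\mathbf{0},E_\dag]$ then forces all such orbits to $\mathbf{0}$. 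For $x(0)\geq E_\dag$, the familiar sandwich between the stationary $E_\dag$ and the decreasing orbit from $b_{\|x(0)\|_\infty}$ forces convergence to $E_\dag$, completing the proof.
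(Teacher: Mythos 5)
Your overall scaffolding (the comparator points $b_m$ from (\ref{defbm}), Smith's \cite[Theorem 2.2.2]{Smith2008} on the order intervals $[\mathbf{0},E_1]$ and $[E_1,E_2]$, and the sandwich argument above $E_2$) is the same as the paper's, and your treatment of cases (1) and of the lower interval in case (3) is sound. But the step you yourself flag as the crux --- case (2) --- rests on a false inequality. You claim that the strict separation $f_1<f_2$ away from $M_\dag$ yields, at second order along the center direction, $f(E_\dag-\delta v)<\mathbf{0}$. It does not. The second component of $f$ is linear and $v\in\ker J_{E_\dag}$, so $f_2(E_\dag-\delta v)=-\delta\,(J_{E_\dag}v)_2=0$ identically. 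Worse, for the third component the only nonzero Hessian entries are $\partial^2_{AM}f_3=r\gamma M_T^*/(M+M_T^*)^2$ and $\partial^2_{MM}f_3=-2r\gamma AM_T^*/(M+M_T^*)^3$, and the eigenvector relation $(1-r)\gamma v_1=\mu_M v_2$ together with $A_\dag=\mu_M M_\dag/((1-r)\gamma)$ gives $A_\dag v_2=M_\dag v_1$, whence
$f_3(E_\dag-\delta v)=\frac{r\gamma M_T^* v_2}{(M_\dag+M_T^*)^2}\Bigl(v_1-\frac{M_\dag v_1}{M_\dag+M_T^*}\Bigr)\delta^2+O(\delta^3)>0$
for small $\delta>0$. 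So $f(E_\dag-\delta v)$ has sign pattern $(-,0,+)$: the orbit from $E_\dag-\delta v$ is not decreasing, it is not bounded away from $E_\dag$ by your argument, and the whole mechanism collapses. The repair is much simpler and is what the paper does (tersely, via ``similar way''): for every $M_T^*>0$ the Jacobian at $\mathbf{0}$ of (\ref{SIT-ode}) has eigenvalues $-(\gamma+\mu_{A,1})$, $-\mu_M$, $-\mu_F$, so $\mathbf{0}$ is locally asymptotically stable; the dichotomy of \cite[Theorem 2.2.2]{Smith2008} on $[\mathbf{0},E_\dag]$ is then broken in favour of $\mathbf{0}$ without any information about the degenerate linearization at $E_\dag$, and $\{x\ge E_\dag\}$ is handled by sandwiching between the constant orbit at $E_\dag$ and the decreasing orbit from $b_{\|x(0)\|_\infty}$. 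You never invoke the stability of $\mathbf{0}$, which is the one fact that makes case (2) (and, more economically, the lower interval of case (3)) go through.

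A second, smaller gap is in case (3): you infer from (\ref{slope-M2}) that $J_{E_2}$ is Hurwitz, but (\ref{slope-M2}) only gives $\det(J_{E_2})<0$, and for a $3\times3$ Metzler matrix a negative determinant is also compatible with the eigenvalue sign pattern $(+,+,-)$; negativity of the Perron root does not follow from the determinant alone. The paper avoids this entirely: on $[E_1,E_2]$ the dichotomy of \cite[Theorem 2.2.2]{Smith2008} is broken by noting that $E_1$ repels along $+v$ (your own first-order expansion, with $\delta$ replaced by $-\delta$, shows $f(E_1+\delta v)>\mathbf{0}$), so all interior orbits must converge to $E_2$; no spectral analysis of $J_{E_2}$ is needed. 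With these two repairs your proof would coincide with the paper's.
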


\begin{proof}
Let us set $x=(A,M,F)'\in\mathcal{D}$ and $\Phi$ a vector-valued
function such that $\Phi(M_T^*,x)=f(x)$ where $f$ is the right hand
side of system (\ref{SIT-ode}). In compact form, we can therefore
write system (\ref{SIT-ode}) as follows:
\begin{equation}\label{SIT-ode-compact}
\displaystyle\frac{d x}{dt}=\Phi(M_T^*,x).
\end{equation}
Denote by $x_{M_T^*}(z, t)$ the solution of (\ref{SIT-ode-compact})
satisfying $x_{M_T^*}(z, 0) = z$. Consider the point $b_m$ as given by
(\ref{bm}). Using (\ref{fbm}) we have 

\begin{equation}\label{decreasing}
    \Phi(M_T^*,b_m)\leq
\Phi(0,b_m)=f(b_m)<\textbf{0}.
\end{equation}

Then the solution initiated at $b_m$ is decreasing and, again by the monotonicity of the system, for any solution of (\ref{SIT-ode-compact}) initiated in $\mathcal{D}$ we have
\begin{equation}\label{upperbound-SIT}
x_{M_T^*}(z, t)\leq b_{||z||_\infty}.
\end{equation}
The a priori upper bound given in (\ref{upperbound-SIT}) provides for existence of the solution for all $t\geq 0$. Therefore, (\ref{SIT-ode-compact}) defines a dynamical system on $\mathcal{D}$.

\begin{itemize}
    \item[(1)] Suppose that $M_T^*>M_{T_1}$. According to Proposition \ref{SIT-ode-equilibre}, system (\ref{SIT-ode-compact}) has only one equilibrium, namely $\bf{0}$. 
    The global asymptotic stability of $\bf{0}$ is proved as in point 1) of Theorem \ref{Mosquitoes-ode-theorem}.
    \item[(3)] Assume that $0<M_T^*<M_{T_1}$. In this case, the dynamical system (\ref{SIT-ode-compact}) has three equilibria \textbf{0}, $E_1$ and
    $E_2$. Since the eigenvalues, $\xi_1=-(\gamma+\mu)$, $\xi_2=-\mu_M$, $\xi_3=-\mu_F$, of the Jacobian matrix of the SIT model (\ref{SIT-ode}) at \textbf{0} are all negative, then the elimination equilibrium \textbf{0} is locally asymptotically stable. Let us consider the order interval $[\textbf{0}, E_1]$. According to \cite[Theorem 2.2.2]{Smith2008}, the solutions initiated in this interval, excluding the end points, either all converge to \textbf{0} or all converge to $E_1$. Since \textbf{0} is asymptotically stable, this implies that all solutions converge to \textbf{0}. Moreover, straightforward computations lead that the Jacobian matrix, $J_{E_1}$, of the SIT model (\ref{SIT-ode}) at $E_1$ is an irreducible Metzler matrix. Hence, it follows from the theory of nonnegative matrices \cite[Theorems 11 and 17]{Haddad2010}, \cite[Proposition 3.4]{Guiver2016}
     that $J_{E_1}$ has an eigenvector $v$ with positive coordinates and associated eigenvalue $\xi$, which is real and has an algebraic multiplicity equal to one. Since $E_1$ is repelling in $[\textbf{0}, E_1]$, then $\xi\geq0$. In fact, $\xi>0$. Indeed, straightforward computations and, taking into account (\ref{slope-M1}), lead that $$\det(J_{E_1})=(1-r)r\gamma^2\phi A\left(\displaystyle\frac{M_T^*}{(M_1+M_T^*)^2}-\frac{\mu_F\mu_{A,2}}{r\phi}\frac{\mu_M}{(1-r)\gamma^2}\right)>0.$$ Therefore, $\xi>0$ since $\det(J_{E_1})$ is the product of eigenvalues of $J_{E_1}$. Next, we consider the order interval $[E_1, E_2]$. Again following \cite[Theorem 2.2.2]{Smith2008}, we deduce that the solutions initiated in this interval, excluding the end points, all converge to $E_2$ since $E_1$ is repelling in the direction of the positive vector $v$. Now, let $x=x(t)$ be any solution of the SIT model (\ref{SIT-ode}) such that $x(0)\geq E_2$. Denote by $y=y(t)$ the solution of (\ref{SIT-ode}) with initial data $y(0)=b_{\|x(0)\|_\infty}$. It follows from inequality  (\ref{decreasing}) that the function $y$ is decreasing and, therefore, it converges. The limit is necessarily an equilibrium greater or equal to $E_2$. However, there is no other equilibrium greater than $E_2$. Thus, the limit of $y(t)$, as $t$ goes to infinity, is $E_2$. Using that model (\ref{SIT-ode}) is a monotone system, $E_2\leq x(0) \leq y(0)$ implies that $E_2\leq x(t)\leq y(t).$ Hence $\lim\limits_{t\rightarrow+\infty}x(t)=E_2$.
     \par  
     The proof of point (2) is done in a
similar way but by considering $E_1:=E_\dag$ to construct the basin of attraction of the elimination equilibrium and, $E_2:=E_\dag$ to construct the basin of attraction of $E_\dag$.
\end{itemize}
\end{proof}

Fig. \ref{box}, page \pageref{box}, depicts a rough illustration of the bistable case obtained in the last part of Theorem \ref{SIT-ode-theorem}. In
Fig. \ref{box}, the black bullet is the wild equilibrium
$E^*=(A^*,M^*,F^*)'$, the blue bullet is the positive unstable
equilibrium ($E_1$) while the red bullet is the positive stable
equilibrium ($E_2$). The dashed black box is the set $[\textbf{0},
E_1)$ which is contained in the basin of attraction of \textbf{0}
while the solid black box is the set $\{x\in \mathbb{R}: x>E_1\}$
which is contained in the basin of attraction $E_2$.

\begin{figure}[H]
    \centering
      \includegraphics[scale=0.55]{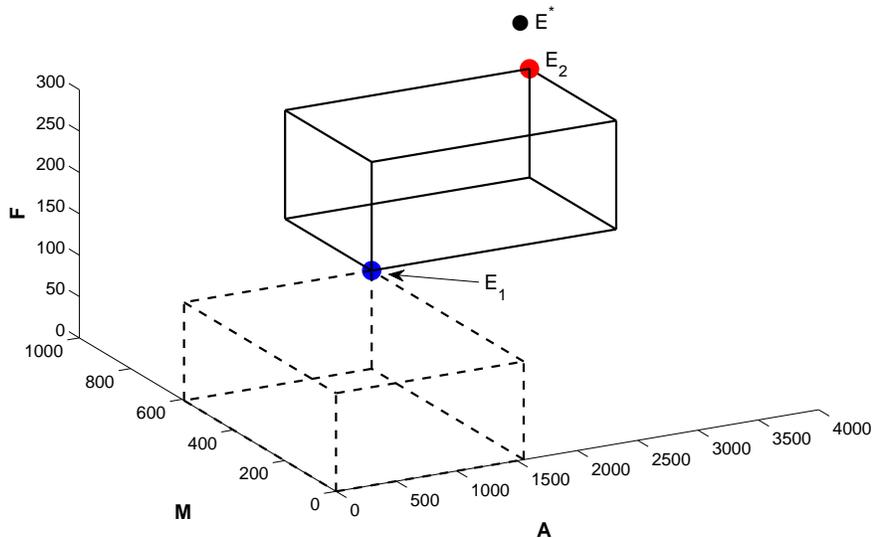}
    \caption{Rough illustration of the bistable case
obtained in the last part of Theorem \ref{SIT-ode-theorem}. The
black bullet is the wild equilibrium $E^*$, the blue bullet is the
positive unstable equilibrium ($E_1$) while the red bullet is the
positive stable equilibrium ($E_2$).}
    \label{box}
\end{figure}

Our aim, with a permanent SIT control is to drive, starting from the equilibrium $E^*$ and using massive releases, the solution of the SIT system inside the set $[\textbf{0}, E_1)$, for a given $M_{T_1}$. Once inside, the LAS property of $\bf{0}$ in $[\textbf{0}, E_1)$ will maintain the solution inside the set $[\textbf{0}, E_1)$. In fact, the solution will slowly, but surely, continue to decay to $\textbf{0}$.

\comment{
\vspace{2cm}

\tcm{IS FIGURE 2 REALLY NECESSARY?}
\begin{figure}[H]
    \centering
     \definecolor{xdxdff}{rgb}{0.49,0.49,1}
\definecolor{qqqqff}{rgb}{0,0,1}
\definecolor{cqcqcq}{rgb}{0.75,0.75,0.75}
\begin{tikzpicture}[line cap=round,line join=round,>=triangle 45,x=1.0cm,y=1.0cm,every node/.style={scale=0.75},scale=0.75]
\clip(-3.5,-7) rectangle (18,7.5); \draw [->] (0,-6) -- (0,7); \draw
[->] (0,-6) -- (15,-6); \draw [line width=0.5pt,dash pattern=on 5pt
off 5pt] (0,-3)-- (15,-3); \draw [shift={(-1.38,-5.75)},line
width=1.2pt,dash pattern=on 1pt off 3pt on 5pt off 4pt]
plot[domain=0.36:1.4,variable=\t]({1*7.87*cos(\t r)+0*7.87*sin(\t
r)},{0*7.87*cos(\t r)+1*7.87*sin(\t r)}); \draw
[shift={(-0.67,-4.83)},line width=1pt]
plot[domain=0.21:1.5,variable=\t]({1*8.86*cos(\t r)+0*8.86*sin(\t
r)},{0*8.86*cos(\t r)+1*8.86*sin(\t r)}); \draw
[shift={(-0.06,-4.13)},line width=1.2pt,dotted]
plot[domain=0.11:1.56,variable=\t]({1*10.13*cos(\t r)+0*10.13*sin(\t
r)},{0*10.13*cos(\t r)+1*10.13*sin(\t r)}); \draw [dash pattern=on
5pt off 5pt] (6,-3)-- (6,-6); \draw [dash pattern=on 5pt off 5pt]
(8,-3)-- (8,-6);\draw [dash pattern=on 5pt off 5pt] (10,-3)--
(10,-6); \draw (10.2,6.4) node[anchor=north west] {$X^{upper}(t):$};
\draw [line width=1.2pt,dotted] (12.5,6)-- (13.5,6); \draw
(11.04,5.5) node[anchor=north west] {$X(t): $}; \draw [line
width=1pt] (12.5,5.1)-- (13.5,5.1); \draw (9.98,4.4)
node[anchor=north west] {$X^{lower}(t):$}; \draw [line
width=1.2pt,dash pattern=on 1pt off 3pt on 5pt off 4pt] (12.5,4)--
(13.5,4); \draw (-0.8,-2.43) node[anchor=north west] {$
\underline{Y} $}; \draw (-1.5,2.5) node[anchor=north west]
{$E^{*}_{lower}$}; \draw (-1,4.4) node[anchor=north west] {$E^{*}$};
\draw (-1.5,6.4) node[anchor=north west] {$E^{*}_{upper}$}; \draw
(5.62,-6) node[anchor=north west] {$\tau^{lower}$}; \draw (7.86,-6)
node[anchor=north west] {$\tau$}; \draw (9.88,-6) node[anchor=north
west] {$\tau^{upper}$}; \draw (-0.42,-6) node[anchor=north west]
{$0$}; \draw (13.78,-6) node[anchor=north west] {$time$};
\end{tikzpicture}
    \caption{Rough illustration of the necessary time ($\tau$) for vector population $X(t)$ to reach the value $\underline{Y}$.
    $X^{upper}(t)$ (resp. $X^{lower}(t)$) stands for an over-estimate (resp. under-estimate)
    solution of $X(t)$. Similarly, $E^*_{upper}$ (resp. $E^*_{lower}$) represents \tcm{an}
    over-estimate (reps. under-estimate) of the vector population at the wild equilibrium
    $E^*$.
    }
    \label{graph}
\end{figure}
}
\section{Characterization of the time necessary to reduce the
amount of vector population}\label{caracterisation}

SIT control generally consists of massive releases in the targeted
area in order to reach elimination or to lower the population under
a certain threshold, to reduce the nuisance (bites) or/and the
epidemiological risk. However, according to our theoretical results,
once SIT is stopped, the system will recover. Thus, SIT always needs
to be maintained. However, from a practical point of view, massive
releases can only occur for a limited period of time, such that it
should be followed by small releases. The objective of this section
is to study such a strategy: first, massive releases, followed by
small releases. To do that, we define the size of the small
releases, $\underline{M}_T^*$, we want to reach, and thus define the
subdomain (which belong to the basin of attraction of $\textbf{0}$)
we need to reach in order to start the small releases, such that the
wild population stays inside the targeted subdomain and slowly but
surely even converges to $\textbf{0}$. Our results are based on the
fact that our system is monotone and bi-stable, such that we are
able to build part of the basin of attraction of \textbf{0}, defined
by $[\textbf{0},E_1)$ (see Theorem \ref{SIT-ode-theorem}). Then, we provide lower and upper bounds for the time, $\tau(M_T^*)$, needed to reach the subdomain $[\textbf{0},E_1)$.

\par

As previously stated, the aim of this section is therefore to
estimate the minimal time $\tau(M_T^*)$ necessary for solution of
system (\ref{SIT-ode}) to be in the box $[\textbf{0}, E_1)$ which
ensures the elimination of vectors in the long term dynamic. Here
$E_1$ is the unstable positive equilibrium of system (\ref{SIT-ode})
defined in Proposition \ref{SIT-ode-equilibre} when the size of the release
$M_T^*$ is such that $0<M_T^*<M_{T_1}$. Therefore in the sequel we assume
that $0<\underline{M}_T^*<M_{T_1}$. Let us consider
$\underline{E}_1=(\underline{A}_1,\underline{M}_1,\underline{F}_1)'$
the unstable positive equilibrium of system (\ref{SIT-ode}) that
corresponds to $\underline{M}_T^*$. Let $\varepsilon>0$, we define
$\underline{Y}=(\underline{A},\underline{M},\underline{F})'$ by
\begin{equation}\label{defY}
\left\{
\begin{array}{ccl}
  \underline{A} & = & \underline{A}_1-\varepsilon, \\
  \underline{M} & = & \underline{M}_1-\varepsilon, \\
  \underline{F} & = & \underline{F}_1-\varepsilon.
\end{array}\right.
\end{equation}

Recall that, the wild vectors equilibrium is
$E^*=(A^*,F^*,M^*)'>\underline{E}_1$ with
\begin{equation}
 \left\{%
\begin{array}{rcl}
A^*&=& \displaystyle\frac{(\gamma+\mu_{A,1})}{\mu_{A,2}}(R-1),\\
 M^*&=& \displaystyle\frac{(1-r)\gamma A^*}{\mu_M},\\
 F^*&=& \displaystyle\frac{r\gamma A^*}{\mu_F}.\\
\end{array}
\right.
\end{equation}
 We denote, for $t\geq t_0$, $a,b\in \mathbf{R}$, $X_{t_0}(t, a, b)$ the solution of system (\ref{SIT-ode})
 with $M_T=b$ such that $X_{t_0}(t_0,a,b)=a$. The following result holds true.

\begin{theorem}[Minimal entry time]\label{lemme-min-time-maths}Assume that $M_T^*>M_{T_1}$ with a targeted release $\underline{M}_T^*$, such that $0<\underline{M}_T^*<M_{T_1}$. Consider
${\bf{0}}<\underline{Y}=(\underline{A},\underline{M},\underline{F})'<\underline{E}_1$,
defined by (\ref{defY}). Then, there exists $\tau>0$ such that:
\begin{itemize}
    \item[(i)]$X_0(\tau, E^*, M_T^*)=\underline{Y}$.
    \item[(ii)] For all $t\geq \tau$, $X_\tau(t, \underline{Y}, \underline{M}_T^*)<\underline{E}_1$ and
    $\lim\limits_{t\rightarrow+\infty}X_\tau(t, \underline{Y}, \underline{M}_T^*)=\bf{0}$.
\end{itemize}
\end{theorem}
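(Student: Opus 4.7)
The overall strategy is to exploit the bistability structure from Theorem \ref{SIT-ode-theorem}: the massive release $M_T^*>M_{T_1}$ makes $\mathbf{0}$ globally attracting (Theorem \ref{SIT-ode-theorem}(1)) and drives the state from $E^*$ into the order interval $[\mathbf{0},\underline{E}_1)$, while the reduced sustained release $\underline{M}_T^*<M_{T_1}$ keeps the state trapped in this box, which is part of the basin of attraction of $\mathbf{0}$ for the reduced dynamics by Theorem \ref{SIT-ode-theorem}(3), and continues to drive it to $\mathbf{0}$. The proof then naturally splits into a ``driving'' phase (i) and a ``trapping'' phase (ii).

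For (i), I would first evaluate the right-hand side of (\ref{SIT-ode}) at $E^*$ with release $M_T^*$. The $A$ and $M$ components vanish because $E^*$ is the wild equilibrium, whereas the $F$ component equals $\left(\dfrac{M^*}{M^*+M_T^*}-1\right)\mu_F F^*<0$. Hence $f(E^*)\le\mathbf{0}$ with a strict inequality in the $F$-component, and by the monotonicity criterion \cite[Proposition 3.2.1]{Smith2008} the trajectory $X_0(\cdot,E^*,M_T^*)$ is componentwise non-increasing. Strong monotonicity of the $(A,F)$-subsystem, as used in the proof of Theorem \ref{Mosquitoes-ode-theorem}, then propagates strict decrease to all three coordinates for $t>0$. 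Since $M_T^*>M_{T_1}$, Theorem \ref{SIT-ode-theorem}(1) forces $X_0(t,E^*,M_T^*)\to\mathbf{0}$ as $t\to+\infty$. Each of the three strictly decreasing continuous coordinates therefore hits the values $\underline{A},\underline{M},\underline{F}>0$ exactly once, and I would define $\tau$ as the maximum of these three hitting times; this yields a well-defined minimal entry time at which $X_0(\tau,E^*,M_T^*)\le\underline{Y}$ with equality in at least one coordinate, which is the sense in which (i) should be read.

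For (ii), I would then restart at the state reached at time $\tau$ (which satisfies $\mathbf{0}<X_0(\tau,E^*,M_T^*)\le\underline{Y}<\underline{E}_1$) using the smaller release $\underline{M}_T^*$. Because $0<\underline{M}_T^*<M_{T_1}$, Theorem \ref{SIT-ode-theorem}(3) applies to the reduced system: the order interval $[\mathbf{0},\underline{E}_1)$ lies in the basin of attraction of $\mathbf{0}$, so $X_\tau(t,\underline{Y},\underline{M}_T^*)\to\mathbf{0}$ as $t\to+\infty$. Containment $X_\tau(t,\underline{Y},\underline{M}_T^*)<\underline{E}_1$ for every $t\ge\tau$ then follows from monotonicity of the flow: $\underline{E}_1$ is a stationary solution of the reduced system and $\underline{Y}<\underline{E}_1$, so the ordering is preserved for all $t$; strict inequality at every $t\ge\tau$ is enforced by uniqueness of solutions, since an equality at any instant would force the trajectory to coincide with the constant $\underline{E}_1$, contradicting $\underline{Y}<\underline{E}_1$.

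The main subtlety is interpretative rather than technical: the equality in (i) cannot hold coordinate-wise for a generic $\underline{Y}$, since the image of a scalar-parameterized trajectory in $\mathbb{R}^3_+$ need not pass through an arbitrary point. I would therefore read (i) as the existence of a minimal $\tau>0$ with $X_0(\tau,E^*,M_T^*)\le\underline{Y}$, which is precisely what the monotone confinement argument in (ii) requires. All other ingredients -- monotonicity, the sign of $f(E^*)$, global attraction of $\mathbf{0}$ under the massive release from Theorem \ref{SIT-ode-theorem}(1), and the basin description under the reduced release from Theorem \ref{SIT-ode-theorem}(3) -- are already supplied by the preceding results.
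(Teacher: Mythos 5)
Your proposal is correct and follows the same overall architecture as the paper's proof: phase (i) rests on the global attractivity of $\mathbf{0}$ under the massive release $M_T^*>M_{T_1}$ (Theorem \ref{SIT-ode-theorem}(1)), and phase (ii) on the fact that $[\mathbf{0},\underline{E}_1)$ lies in the basin of attraction of $\mathbf{0}$ for the reduced release (Theorem \ref{SIT-ode-theorem}(3)). Where you genuinely diverge is in how the entry time $\tau$ is produced. The paper argues that, since $\mathbf{0}<\underline{Y}<E^*$ and the trajectory converges to $\mathbf{0}$, continuity yields finitely many times $\tau_i$ at which $X_0(\tau_i,E^*,M_T^*)=\underline{Y}$ exactly, and takes $\tau=\min_i\tau_i$; this is an intermediate-value argument applied to a vector-valued curve, and, as you correctly point out, a curve in $\mathbb{R}^3_+$ joining $E^*$ to $\mathbf{0}$ need not pass through a prescribed interior point, so the equality in (i) is generically unattainable. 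Your alternative -- checking that $f(E^*)\le\mathbf{0}$ with strict inequality in the $F$-component under the massive release, invoking \cite[Proposition 3.2.1]{Smith2008} to get a componentwise decreasing trajectory, and defining $\tau$ as the largest of the three scalar hitting times so that $X_0(\tau,E^*,M_T^*)\le\underline{Y}$ -- is more rigorous, and the $\le$ conclusion is exactly what phase (ii) needs, since the monotonicity of the reduced flow lets you compare the actual state at time $\tau$ with $\underline{Y}$ and hence with $\underline{E}_1$. Your explicit treatment of the strict containment $X_\tau(t,\underline{Y},\underline{M}_T^*)<\underline{E}_1$ via order preservation and uniqueness is also slightly more detailed than the paper's, which simply cites the basin statement. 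In short: same strategy, but your version of step (i) repairs a real imprecision in the published argument at the cost of reading the equality in the theorem statement as a componentwise inequality.
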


\begin{proof}When $M_T=M_T^*>M_{T_1}$, it follows from Theorem
\ref{SIT-ode-theorem} that $\textbf{0}$ is globally asymptotically
stable for system (\ref{SIT-ode}) in $\mathbb{R}^3_+$. That is, for
all $\alpha>0$ there exist $t_\alpha>0$ such that for all $t\geq
t_\alpha$ $\|X_0(t, E^*, M_T^*)\|_{\mathbb{R}^3}\leq\alpha$. In
particular, for
$\alpha=\displaystyle\frac{1}{10^n}\|\underline{Y}\|_{\mathbb{R}^3}$,
with $n\in \mathbb{N}^*$ sufficiently large, such $t_\alpha$ exists.
Since ${\bf{0}}<\underline{Y}<E^*$ and by the continuity of
$X_0(t, E^*, M_T^*)$ we deduce that there exist a finite sequence
$(\tau_i)_{i=1,...,p}$ such that $0<\tau_1<\tau_2<...<\tau_p$ and
$X_0(\tau_i, E^*, M_T^*)=\underline{Y}$. We therefore set
$$\tau:=\min\limits_{i=1,...,p}\tau_i$$ and  part $(i)$ of Theorem
\ref{lemme-min-time-maths} holds.
\par

When $M_T=\underline{M}_T^*\in(0,M_{T_1})$, part $(ii)$ follows from the fact that $\bf{0}$ is LAS and the set $[{\bf{0}},\underline{E}_1)$ is contained in
its basin of attraction (Theorem \ref{SIT-ode-theorem})
\end{proof}

\begin{remark}\label{remark-offspring}
It is important to observe that if the massive release is stopped
before the prescribed period of time, $\tau$, obtained in Theorem
\ref{lemme-min-time-maths}, system (\ref{SIT-ode}) will converge
towards the positive stable equilibrium.
\end{remark}

Under certain conditions we can derive an analytic approximation for
the minimal time, $\tau$, defined in Theorem \ref{lemme-min-time-maths}.
We deal with that issue in the sequel. In this section, we assume that
\begin{equation}\label{tech-assump}
    \mu_F<\min\{\mu_M,\gamma+\mu_{A,1}\}.
\end{equation}
Assumption (\ref{tech-assump}) is also supported by parameter values considered, for the case of \textit{Aedes spp.}, in \cite{Anguelov2012TIS, Bliman2019, Chitnis2008}.

The following inequalities holds
\begin{equation}\label{SIT-ode-lower-upper-equilibrium}
 \begin{array}{rcccl}
0&\leq& A^*&\leq& \displaystyle\frac{(\gamma+\mu_{A,1})}{\mu_{A,2}}R:=A_e^0,\\
0&\leq& M^*&\leq& \displaystyle\frac{(1-r)\gamma}{\mu_M}\displaystyle\frac{(\gamma+\mu_{A,1})}{\mu_{A,2}}R:=M_e^0,\\
0&\leq& F^*&\leq&\displaystyle\frac{r\gamma}{\mu_F}\displaystyle\frac{(\gamma+\mu_{A,1})}{\mu_{A,2}}R:=F_e^0.\\
\end{array}
\end{equation}
Let us consider the solution $X(t)=(A(t),M(t),F(t))'$ of system
(\ref{SIT-ode}) with initial data $E^*$. In order to estimate the (minimal) time needed to drive the vector population under a given value $\underline{Y}=(\underline{A},\underline{M},\underline{F})'<E^*$, we will look for an analytical upper bound of $X(t)$, $X^{upper}(t)$.

According to system (\ref{SIT-ode}), we have
\begin{equation}\label{SIT-ode_ineq}
 \left\{%
\begin{array}{lcl}
 \displaystyle\frac{dA}{d t} &\leq & \phi F-(\gamma+\mu_{A,1})A,\\
     \displaystyle\frac{dM}{d t} &=& (1-r)\gamma A-\mu_MM,\\
    \displaystyle\frac{dF}{d t} &=& \displaystyle\frac{ M}{M+M_T}r\gamma A-\mu_FF,\\
\end{array}
\right.
\end{equation}
that is $$\displaystyle\frac{dX}{dt}\leq Z X$$ where
$$Z=\left(
      \begin{array}{ccc}
        -(\gamma+\mu_{A,1}) & 0 & \phi \\
        (1-r)\gamma & -\mu_M & 0 \\
        r\gamma\epsilon(M_T^*) & 0 & -\mu_F \\
      \end{array}
    \right)
$$ and $\epsilon(M_T^*)=M^*/(M^*+M_T^*)<1$.
Let us set $X_e(t)=(A_e(t), M_e(t), F_e(t))'$, the solution of
\begin{equation}\label{SIT-ODE-Comparison-lineaire}
    \displaystyle\frac{dX_e}{dt}= Z X_e.
\end{equation}

Before going further, let us give the following result that is
deduced from Proposition 1.4 and Corollary 1.6 in \cite{Kirkilionis2004} thanks to the fact that systems (\ref{SIT-ode}) and (\ref{SIT-ODE-Comparison-lineaire}) are cooperative systems.

\begin{lemma}\label{comparaison-monotonie}
Solution of systems (\ref{SIT-ode}) and
(\ref{SIT-ODE-Comparison-lineaire}) with initial data such that
$$(A^0, M^0, F^0)'\leq (A^0_e,M^0_e,F^0_e)':=X_e^0$$ satisfy $$\forall
t\geq0,\quad X(t)\leq X_e(t).$$
\end{lemma}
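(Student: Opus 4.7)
The plan is to combine cooperativity of both systems with a componentwise inequality between their vector fields, and then invoke the cited Kamke--Müller-type comparison theorem.

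First I would verify that both systems are quasi-monotone (cooperative) on $\mathcal{D}$. For the linear system (\ref{SIT-ODE-Comparison-lineaire}), the Jacobian is precisely $Z$; by direct inspection all off-diagonal entries
$$
\phi,\qquad (1-r)\gamma,\qquad r\gamma\,\epsilon(M_T^*)
$$
are nonnegative, so $Z$ is Metzler and (\ref{SIT-ODE-Comparison-lineaire}) generates a monotone semi-flow on $\mathbb{R}^3_+$. For the nonlinear system (\ref{SIT-ode}), the Jacobian has the same sign pattern as (\ref{Jacobian}) (with the extra factor $M/(M+M_T^*)$ and its derivative contributing nonnegative off-diagonal terms), hence it is also Metzler and the system is cooperative.

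Next I would establish the pointwise vector-field inequality $f(X)\leq Z X$ componentwise, for $X=(A,M,F)'\geq\mathbf{0}$ with $M\leq M^*$. Comparing the nonlinear right-hand side $f$ with the linear one $Z X$:
\begin{itemize}
\item The $A$-component differs by $-\mu_{A,2}A^2\leq 0$, because the logistic term is dropped.
\item The $M$-component is identical on both sides.
\item The $F$-component differs by $r\gamma A\bigl(\tfrac{M}{M+M_T^*}-\epsilon(M_T^*)\bigr)$; since $M\mapsto M/(M+M_T^*)$ is increasing and $\epsilon(M_T^*)=M^*/(M^*+M_T^*)$, the difference is $\leq 0$ whenever $M\leq M^*$.
\end{itemize}
This is exactly the inequality chain (\ref{SIT-ode_ineq}) that the authors write immediately before the lemma.

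Finally I would apply the comparison principle for cooperative systems (Proposition~1.4 and Corollary~1.6 of \cite{Kirkilionis2004}): given two quasi-monotone vector fields $f\leq g$ on a common domain and solutions $X,X_e$ of $\dot X=f(X)$, $\dot X_e=g(X_e)$ with $X(0)\leq X_e(0)$, one has $X(t)\leq X_e(t)$ for all $t\geq 0$. Taking $g(X)=Z X$ yields the conclusion.

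The main obstacle is justifying the third-component inequality rigorously, since it requires $M(t)\leq M^*$ along the nonlinear trajectory. In the setting where the lemma is applied — starting from the wild equilibrium $E^*$ (so $M(0)=M^*$) and activating SIT — monotone decay of the vector population under the perturbed dynamics keeps $M(t)\leq M^*$, closing the argument. In a more general formulation one would restrict the comparison to the invariant order interval $\{X\in\mathcal{D}:M\leq M^*\}$ and invoke the forward invariance ensured by the Metzler structure before concluding via the cited comparison results.
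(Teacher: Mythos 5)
Your proof follows essentially the same route as the paper: establish the componentwise differential inequality $dX/dt\leq ZX$ and invoke the Kamke--M\"uller comparison principle for cooperative systems via the cited results of \cite{Kirkilionis2004}. In fact you go slightly further than the paper by making explicit the step it leaves implicit, namely that the $F$-component inequality requires $M(t)\leq M^*$, which holds because $\Phi(M_T^*,E^*)\leq\mathbf{0}$ forces the trajectory started at $E^*$ to be non-increasing; this is a correct and welcome addition, not a deviation.
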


In the sequel, we follow the idea of \cite{Strugarek2019} in our computations. The sub-matrix $Z_0$ of
$Z$ that reads as

$$Z_0=\left(
      \begin{array}{cc}
        -(\gamma+\mu_{A,1}) & \phi \\
        r\gamma\epsilon(M_T^*) & -\mu_F \\
      \end{array}
    \right)
$$ has negative trace. Moreover, $Z_0$ has a positive determinant if
and only if $1/R>\epsilon(M_T^*)$. Therefore, if
$\epsilon(M_T^*)R<1$ then \textbf{0} is globally asymptotically
stable for system (\ref{SIT-ODE-Comparison-lineaire}). In this case,
its eigenvalues are real, negative and equal to $\kappa_\pm$
($\kappa_-<\kappa_+$) associated respectively with eigenvectors
$\left(
                     \begin{array}{c}
                       1 \\
                       x_\pm \\
                     \end{array}
                   \right)
$ where, with assumption (\ref{tech-assump}) $x_-<0<x_+$ and
$$
\begin{array}{ccl}
  \kappa_\pm & = & \displaystyle\frac{-(\gamma+\mu_{A,1}+\mu_F)\pm\sqrt{(\gamma+\mu_{A,1}-\mu_F)^2+4\phi r\gamma\epsilon(M_T^*)}}{2}, \\
  x_\pm & = &
    \displaystyle\frac{\gamma+\mu_{A,1}-\mu_F\pm\sqrt{(\gamma+\mu_{A,1}-\mu_F)^2+4\phi r\gamma\epsilon(M_T^*)}}{2\phi}.
\end{array}
$$

Hence for real numbers $(a^0_\pm, b^0_{\pm})'\in \mathbb{R}^4$, we
have
$$\left(
                     \begin{array}{c}
                       A_e(t) \\
                       M_e(t)\\
                       F_e(t) \\
                     \end{array}
                   \right)=\left(
                     \begin{array}{l}
                       a^0_+e^{\kappa_+t}+a^0_-e^{\kappa_-t} \\
                       e^{-\mu_Mt}M_e^0+(1-r)\gamma\displaystyle\int_0^te^{-\mu_M(t-s)}(a^0_+e^{\kappa_+s}+a^0_-e^{\kappa_-s})ds\\
                       b^0_+e^{\kappa_+t}+b^0_-e^{\kappa_-t} \\
                     \end{array}\right)
                   $$
where $a^0_\pm, b^0_{\pm}$ are computed by using the overestimation
$(A_e^0, F_e^0)'$ in (\ref{SIT-ode-lower-upper-equilibrium}) as
initial condition. In details, we found
$$\left\{
\begin{array}{rcl}
   a^0_+& = & \displaystyle\frac{x_-A_e^0-F_e^0}{x_--x_+}, \qquad a_-^0=\displaystyle\frac{-x_+A_e^0+F_e^0}{x_--x_+}, \\
  b^0_+& = & \displaystyle\frac{x_+x_-A_e^0-x_+F_e^0}{x_--x_+}, \qquad  b_-^0=\displaystyle\frac{-x_+x_-A_e^0+x_-F_e^0}{x_--x_+}.
\end{array}\right.
$$
Note that
$$x_--x_+=-\displaystyle\frac{\sqrt{(\gamma+\mu_{A,1}-\mu_F)^2+4\phi r\gamma\epsilon(M_T^*)}}{\phi}<0,$$
$a_+^0>0$, $b_+^0>0$, $a_-^0<0$ and $b_-^0=x_-a_-^0>0$. Indeed, for
$\Delta=(\gamma+\mu_{A,1}-\mu_F)^2+4\phi r\gamma\epsilon(M_T^*)$ we
have
$$
\begin{array}{ccl}
  a_-^0<0 & \Leftrightarrow& x_+A_e^0<F_e^0 \\
   &\Leftrightarrow  & \displaystyle\frac{((\gamma+\mu_{A,1})-\mu_F+\sqrt{\Delta})}{2\phi}\displaystyle\frac{(\gamma+\mu_{A,1})R}{\mu_{A,2}}<\displaystyle\frac{r\gamma(\gamma+\mu_{A,1})R}{\mu_F\mu_{A,2}} \\
   & \Leftrightarrow & (\gamma+\mu_{A,1})-\mu_F+\sqrt{\Delta}<\displaystyle\frac{2\phi r\gamma}{\mu_{F}} \\
   & \Leftrightarrow & \sqrt{\Delta}<(\gamma+\mu_{A,1})(2R-1)+\mu_F \\
   & \Leftrightarrow &
   r\gamma\phi\epsilon(M_T^*)<(\gamma+\mu_{A,1})^2R(R-1)+r\gamma\phi\\
   &\Leftrightarrow& r\gamma\phi(\epsilon(M_T^*)-1)<0<(\gamma+\mu_{A,1})^2R(R-1).\\
\end{array}
$$

In addition, by using assumption (\ref{tech-assump}) we also have
$$\kappa_++\mu_M=\displaystyle\frac{2(\mu_M-\mu_F)-(\gamma+\mu_{A,1}-\mu_F)+\sqrt{\Delta}}{2}>0.$$
Moreover, assuming $\kappa_-\neq-\mu_M$ (which most holds generally)
leads that

$$\begin{array}{ccl}
    M_e(t)&=&e^{-\mu_Mt}M_e^0+(1-r)\gamma\left(a_+^0\displaystyle\frac{e^{\kappa_+t}-e^{-\mu_Mt}}{\mu_M+\kappa_+}+a_-^0\displaystyle\frac{e^{\kappa_-t}-e^{-\mu_Mt}}{\mu_M+\kappa_-}\right) \\
     & = & \left(M_e^0-\displaystyle\frac{(1-r)\gamma a_+^0}{\mu_M+\kappa_+}-\displaystyle\frac{(1-r)\gamma
     a_-^0}{\mu_M+\kappa_-}\right)e^{-\mu_Mt}+\displaystyle\frac{(1-r)\gamma a_+^0}{\mu_M+\kappa_+}e^{\kappa_+t}+\displaystyle\frac{(1-r)\gamma
     a_-^0}{\mu_M+\kappa_-}e^{\kappa_-t}.
  \end{array}
$$
Before going further, recall that
$$0<\underline{Y}<E^*<X_e^0.$$

Since $a_-^0<0$, $A_e(t)\leq \underline{A}$ if
$a_+^0e^{\kappa_+t}\leq \underline{A}.$ That is if
\begin{equation}\label{min-t-A}
    t\geq
    t^A_{min}:=\displaystyle\frac{1}{\kappa_+}\log\left(\displaystyle\frac{\underline{A}}{a^0_+}\right).
\end{equation}

By using the fact that $b^0_++b^0_-=F_e^0$, we deduce that
$F_e(t)\leq \underline{F}$ if $F_e^0e^{\kappa_+t}\leq
\underline{F}.$ That is if
\begin{equation}\label{min-t-F}
    t\geq
    t^F_{min}:=\displaystyle\frac{1}{\kappa_+}\log\left(\displaystyle\frac{\underline{F}}{F^0_e}\right).
\end{equation}

We proved that $\kappa_++\mu_M>0$ but we need to discuss the two
cases $\kappa_-+\mu_M>0$ and $\kappa_-+\mu_M<0$.
\par
 In the case that
$\kappa_-+\mu_M>0$, with $a_-^0<0$ we have

$$M_e(t)\leq \left(M_e^0-\displaystyle\frac{(1-r)\gamma
     a_-^0}{\mu_M+\kappa_-}\right)e^{-\mu_Mt}+\displaystyle\frac{(1-r)\gamma a_+^0}{\mu_M+\kappa_+}e^{\kappa_+t}.$$
Since $\kappa_+>\mu_M$, we obtain
$$M_e(t)\leq \left(M_e^0-\displaystyle\frac{(1-r)\gamma
     a_-^0}{\mu_M+\kappa_-}+\displaystyle\frac{(1-r)\gamma a_+^0}{\mu_M+\kappa_+}\right)e^{\kappa_+t}:=\lambda_-e^{\kappa_+t}$$
     where $\lambda_-=M_e^0-\displaystyle\frac{(1-r)\gamma
     a_-^0}{\mu_M+\kappa_-}+\displaystyle\frac{(1-r)\gamma
     a_+^0}{\mu_M+\kappa_+}>0.$ Therefore, $M_e(t)\leq \underline{M}$ if $\lambda_-e^{\kappa_+t}\leq
\underline{M}.$ That is if
\begin{equation}\label{min-t-M+}
    t\geq
    t^M_{min}:=\displaystyle\frac{1}{\kappa_+}\log\left(\displaystyle\frac{\underline{M}}{\lambda_-}\right).
\end{equation}

In the case that $\kappa_-+\mu_M<0$, with $a_-^0<0$ we have

$$M_e(t)\leq M_e^0e^{-\mu_Mt}+\displaystyle\frac{(1-r)\gamma a_+^0}{\mu_M+\kappa_+}e^{\kappa_+t}+\displaystyle\frac{(1-r)\gamma a_-^0}{\mu_M+\kappa_-}e^{\kappa_-t}.$$
Since $\kappa_+>\mu_M$ and $\kappa_+>\kappa_-$, we obtain

$$M_e(t)\leq \left(M_e^0+\displaystyle\frac{(1-r)\gamma
     a_-^0}{\mu_M+\kappa_-}+\displaystyle\frac{(1-r)\gamma a_+^0}{\mu_M+\kappa_+}\right)e^{\kappa_+t}:=\lambda_+e^{\kappa_+t}$$

     where $\lambda_+=M_e^0+\displaystyle\frac{(1-r)\gamma
     a_-^0}{\mu_M+\kappa_-}+\displaystyle\frac{(1-r)\gamma
     a_+^0}{\mu_M+\kappa_+}>0.$ Therefore, $M_e(t)\leq \underline{M}$ if $\lambda_+e^{\kappa_+t}\leq
\underline{M}.$ That is if
\begin{equation}\label{min-t-M-}
    t\geq
    t^M_{min}:=\displaystyle\frac{1}{\kappa_+}\log\left(\displaystyle\frac{\underline{M}}{\lambda_+}\right).
\end{equation}

 Hence, we have proved the following result.

\begin{proposition}\label{proposition-tau-upper}Let $(A(t), M(t), F(t))'$ be a solution of system
(\ref{SIT-ode}) initiated at the wild equilibrium
$E^*=(A^*,M^*,F^*)'$. Assume that  $\epsilon(M_T^*)R<1$ where
$\epsilon(M_T^*)=M^*/(M^*+M_T^*)$. The necessary time $\tau(M_T^*)$
to lower the vector population from $E^*$ to
$\underline{Y}=(\underline{A}, \underline{M},\underline{F})'$ defined
in (\ref{defY}) with $\underline{A}<A^*$, $\underline{M}<M^*$ and
$\underline{F}<F^*$ is such that
$$\tau(M_T^*)\geq\max(t^A_{min}, t^M_{min}, t^F_{min})$$ where $t^A_{min}$
is given by (\ref{min-t-A}), $t^F_{min}$ is given by (\ref{min-t-F})
and $t^M_{min}$ is given by (\ref{min-t-M+}) or (\ref{min-t-M-}).
\end{proposition}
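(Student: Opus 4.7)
The plan is to dominate the nonlinear trajectory by a linear one whose decay is explicit, then invert the leading exponential mode to read off the time needed to fall below $\underline{Y}$.

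First, I would invoke the comparison principle provided by Lemma~\ref{comparaison-monotonie}. Replacing the ratio $M/(M+M_T^*)$ by the smaller constant $\epsilon(M_T^*)=M^*/(M^*+M_T^*)$ and dropping the nonnegative density term $\mu_{A,2}A^2$ in the aquatic equation turns (\ref{SIT-ode}) into the linear cooperative system (\ref{SIT-ODE-Comparison-lineaire}). Initiating the nonlinear trajectory at $E^*$ and the linear one at the overestimate $X_e^0=(A_e^0,M_e^0,F_e^0)'$ from (\ref{SIT-ode-lower-upper-equilibrium}) (which dominates $E^*$ componentwise), the lemma yields $X(t)\leq X_e(t)$ for all $t\geq 0$.

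Next, I would diagonalize and solve. The hypothesis $\epsilon(M_T^*)R<1$ forces the $(A_e,F_e)$ block $Z_0$ to have negative trace and positive determinant, so $Z_0$ has two real negative eigenvalues $\kappa_-<\kappa_+<0$, with eigenvector slopes $x_-<0<x_+$ thanks to (\ref{tech-assump}). This gives the closed form $(A_e(t),F_e(t))=a_+^0 e^{\kappa_+ t}(1,x_+)+a_-^0 e^{\kappa_- t}(1,x_-)$. A short algebraic check (using $R>1$ together with (\ref{tech-assump})) shows $a_+^0>0$, $a_-^0<0$, $b_+^0>0$, $b_-^0>0$. Variation of constants in the $M_e$-equation then produces a combination of the three modes $e^{-\mu_M t}$, $e^{\kappa_+ t}$, $e^{\kappa_- t}$.

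The third step is to collapse each component to a single dominating exponential. Dropping the negative $e^{\kappa_- t}$ contribution yields $A_e(t)\leq a_+^0 e^{\kappa_+ t}$; using the identity $b_+^0+b_-^0=F_e^0$ gives $F_e(t)\leq F_e^0 e^{\kappa_+ t}$. Solving the resulting inequalities for $t$ produces the thresholds $t^A_{min}$ and $t^F_{min}$ in (\ref{min-t-A})--(\ref{min-t-F}). For $M_e(t)$, assumption (\ref{tech-assump}) guarantees $\kappa_+ + \mu_M>0$, so $e^{-\mu_M t}\leq e^{\kappa_+ t}$; case-splitting on the sign of $\kappa_-+\mu_M$ gathers the three exponentials into a positive constant $\lambda_\pm$ times $e^{\kappa_+ t}$, leading to the threshold $t^M_{min}$ in (\ref{min-t-M+}) or (\ref{min-t-M-}).

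Finally, for every $t\geq\max(t^A_{min},t^M_{min},t^F_{min})$ the dominated trajectory satisfies $X(t)\leq X_e(t)\leq\underline{Y}$, which yields the claimed estimate on $\tau(M_T^*)$. The main obstacle I anticipate is the $M_e$-component: its variation-of-constants representation mixes three exponential modes whose ordering depends on the parameters, and making all of them subordinate to the dominant mode $e^{\kappa_+ t}$ with a positive aggregate coefficient is precisely where the mortality ordering (\ref{tech-assump}) and a careful case analysis on the sign of $\kappa_-+\mu_M$ are needed.
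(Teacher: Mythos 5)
Your proposal is correct and follows essentially the same route as the paper: domination of the nonlinear trajectory by the linear cooperative system (\ref{SIT-ODE-Comparison-lineaire}) via Lemma~\ref{comparaison-monotonie}, explicit diagonalization of $Z_0$ with the sign analysis of $a^0_\pm$, $b^0_\pm$, variation of constants for $M_e$, and the reduction of each component to the dominant mode $e^{\kappa_+ t}$ with the case split on the sign of $\kappa_-+\mu_M$. The only wording to tighten is that $\epsilon(M_T^*)$ must be an \emph{upper} bound for $M(t)/(M(t)+M_T^*)$ along the trajectory (which holds because the solution started at $E^*$ is non-increasing, so $M(t)\leq M^*$), not a ``smaller'' constant.
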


\section{SIT with periodic impulsive releases}\label{SIT-Pulse}
Continuous releases, while mathematically very convenient, are not realistic. In general releases are periodic and instantaneous. That is why, we consider the following SIT model with periodic impulsive releases
\begin{equation}\label{SIT-ode-pulse}
 \left\{%
\begin{array}{rcl}
 \displaystyle\frac{dA}{d t} &=& \phi F-(\gamma+\mu_{A,1}+\mu_{A,2}A)A,\\
     \displaystyle\frac{dM}{d t} &=& (1-r)\gamma A-\mu_MM,\\
    \displaystyle\frac{dF}{d t} &=& \displaystyle\frac{ M}{M+M_T}r\gamma A-\mu_FF,\\
    \displaystyle\frac{dM_T}{d t} &=& -\mu_TM_T,\\
    M_T(n\tau^+) &=& M_T(n\tau)+\tau\Lambda,\quad n=1,2,...
\end{array}
\right.
\end{equation}
where $\tau$ (in unit of time) is the pulse release period. The right-hand side of system (\ref{SIT-ode-pulse}) is locally Lipschitz continuous on $\mathbb{R}^4$. Thus, using a classic existence theorem (Theorem 1.1, p. 3 in \cite{BP1993}), there exists $T^*>0$ and a unique solution defined from $(0,T^*)\rightarrow \mathbb{R}^4$. Then, using standard arguments, we show that the positive orthant $\mathbb{R}^4$ is an invariant region for system (\ref{SIT-ode-pulse}).

From the last two equations of system (\ref{SIT-ode-pulse}), we
deduce that, as $t\rightarrow+\infty$, $M_T$ converges toward the
periodic solution

\begin{equation}\label{MT-pulse}
  M_T^{per}(t)= \displaystyle\frac{ \tau\Lambda}{1-e^{-\mu_T\tau}}e^{-\mu_T(t-\lfloor
  t/\tau\rfloor\tau)}.
\end{equation}

Thus, solutions of system (\ref{SIT-ode-pulse}) converges, in the sense of $L^{\infty}(0,+\infty)$ norm, to solutions of the following system
\begin{equation}\label{SIT-ode-pulse-2}
 \left\{%
\begin{array}{rcl}
 \displaystyle\frac{dA}{d t} &=& \phi F-(\gamma+\mu_{A,1}+\mu_{A,2}A)A,\\
     \displaystyle\frac{dM}{d t} &=& (1-r)\gamma A-\mu_MM,\\
    \displaystyle\frac{dF}{d t} &=& \displaystyle\frac{ M}{M+M_T^{per}(t)}r\gamma A-\mu_FF.\\
\end{array}
\right.
\end{equation}
System (\ref{SIT-ode-pulse-2}) is a periodic monotone dynamical system that admits one solution $X$. Substituting
\begin{equation}\label{MT-pulse-lower}
 \underline{M}_T:=\min\limits_{t\in[0,\tau]}M_T^{per}(t)=\displaystyle\frac{
\tau\Lambda}{1-e^{-\mu_T\tau}}e^{-\mu_T\tau},
\end{equation}
in system (\ref{SIT-ode-pulse-2}) leads to the following constant SIT model
\begin{equation}\label{SIT-ode-pulse-cst}
 \left\{%
\begin{array}{rcl}
 \displaystyle\frac{dA}{d t} &=& \phi F-(\gamma+\mu_{A,1}+\mu_{A,2}A)A,\\
     \displaystyle\frac{dM}{d t} &=& (1-r)\gamma A-\mu_MM,\\
    \displaystyle\frac{dF}{d t} &=& \displaystyle\frac{ M}{M+\underline{M}_T}r\gamma A-\mu_FF.\\
\end{array}
\right.
\end{equation}
whose solution $X_M$ is such that $X_M \geq X$ for all time $t > 0$, using a comparison principle. Thus applying to system (\ref{SIT-ode-pulse-cst}) the results obtained in Theorems \ref{SIT-ode-theorem} and  \ref{lemme-min-time-maths}, we obtain conditions on the size and the periodicity of the releases to get GAS or LAS of $\bf{0}$.
Using $M_{T_1}$ defined in (\ref{MT1}), we set
\begin{equation}
    M_{T_1}^{per}=M_{T_1}\left(e^{\mu_T\tau}-1\right).
\end{equation}
$M_{T_1}^{per}$ is not the best release value for the periodic case. Most probably the best release value should depend on $\frac{1}{\tau}\int_{0}^{\tau} \frac{1}{M_T^{per}(t)}dt$, like in \cite{Bliman2019}.
Then, following Theorem \ref{SIT-ode-theorem},  we deduce
\begin{proposition}\label{SIT-Pulse-proposition}
For $\tau$ and $\Lambda$ given, and 
\begin{itemize} 
\item[(i)] Assuming \begin{equation}\label{Seuil-pulse-release}
 \tau\Lambda > M_{T_1}^{per},
\end{equation}
then $\bf{0}$ is globally asymptotically stable in (\ref{SIT-ode-pulse-2}).
    \item[(ii)]Assuming \begin{equation}
 \tau\Lambda = M_{T_1}^{per},
\end{equation}
then $\bf{0}$ is locally asymptotically stable in
(\ref{SIT-ode-pulse-2}), and $[{\bf{0}}, E_\dagger( \underline{M}_T))$ lies in
its basin of attraction. 
    \item[(iii)]Assuming \begin{equation}
 0<\tau\Lambda < M_{T_1}^{per},
\end{equation}
then \textbf{0} is locally asymptotically stable in
(\ref{SIT-ode-pulse-2}), and $[{\bf{0}}, E_1(\underline{M}_T))$ lies in its
basin of attraction. 
\end{itemize}
\end{proposition}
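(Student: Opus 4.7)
The plan is to reduce everything to a comparison between the non-autonomous system (\ref{SIT-ode-pulse-2}) and the autonomous, constant-release system (\ref{SIT-ode-pulse-cst}), and then invoke Theorem~\ref{SIT-ode-theorem} applied to the latter.

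First I would verify the algebraic identity $\underline{M}_T = \dfrac{\tau\Lambda}{e^{\mu_T\tau}-1}$, so that the three hypotheses in the proposition translate cleanly into $\underline{M}_T > M_{T_1}$, $\underline{M}_T = M_{T_1}$, and $0<\underline{M}_T<M_{T_1}$ respectively. This is the bridge between the ``periodic'' threshold $M_{T_1}^{per}$ and the ``constant'' threshold $M_{T_1}$ for which all stability statements in Theorem~\ref{SIT-ode-theorem} are already available.

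Next I would establish the key comparison: if $X=(A,M,F)'$ solves (\ref{SIT-ode-pulse-2}) and $X_M$ solves (\ref{SIT-ode-pulse-cst}) with the same initial data, then $X(t)\leq X_M(t)$ for all $t\geq 0$. The point is that only the $F$-equation depends on the sterile population, and since $u\mapsto \tfrac{M}{M+u}r\gamma A$ is non-increasing in $u$, the inequality $M_T^{per}(t)\geq \underline{M}_T$ gives a pointwise inequality of the right-hand sides. Both vector fields are cooperative (Metzler Jacobian, as already noted in the excerpt), so a standard Kamke comparison argument (e.g. \cite[Proposition~1.4]{Kirkilionis2004}, already cited in the excerpt) yields $X\leq X_M$. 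I would also remark that the argument used to bound solutions from above in the proof of Theorem~\ref{SIT-ode-theorem} (via the auxiliary point $b_m$) works verbatim to ensure global existence of $X$ on $[0,+\infty)$.

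With the comparison in hand the three cases follow immediately. In case (i), $\underline{M}_T>M_{T_1}$ places us in item (1) of Theorem~\ref{SIT-ode-theorem} for (\ref{SIT-ode-pulse-cst}), so $X_M(t)\to\mathbf{0}$ globally on $\mathcal{D}$, and by $0\leq X\leq X_M$ the same holds for $X$; local asymptotic stability of $\mathbf{0}$ in (\ref{SIT-ode-pulse-2}) follows by linearization near $\mathbf{0}$, where the $M_T$-term dominates and makes the linearized system identical to that of the constant case. In cases (ii) and (iii), items (2) and (3) of Theorem~\ref{SIT-ode-theorem} give that the intervals $[\mathbf{0},E_\dagger(\underline{M}_T))$ and $[\mathbf{0},E_1(\underline{M}_T))$ lie in the basin of attraction of $\mathbf{0}$ for (\ref{SIT-ode-pulse-cst}); for any initial datum in one of these intervals, the bound $X\leq X_M$ keeps $X$ in the same interval and forces $X\to\mathbf{0}$.

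The main obstacle I anticipate is a technical one rather than a conceptual one: justifying that the comparison argument, formulated for two autonomous cooperative systems, still applies when one of the systems is non-autonomous with a discontinuous driver (the impulsive $M_T$). I would handle this by first working on $(\ref{SIT-ode-pulse-2})$ where $M_T^{per}(t)$ is already the smooth periodic limit, noting that the inequality of right-hand sides holds for almost every $t$, and invoking a time-dependent version of the Kamke comparison lemma. A small additional step is needed to transfer the conclusion back to the original impulsive system (\ref{SIT-ode-pulse}), using the $L^\infty$-convergence of its $M_T$-component to $M_T^{per}$ that the excerpt has already recorded.
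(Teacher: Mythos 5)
Your proposal follows essentially the same route as the paper: both bound the periodic release from below by its minimum $\underline{M}_T=\tau\Lambda/(e^{\mu_T\tau}-1)$, compare the periodic system (\ref{SIT-ode-pulse-2}) with the constant-release system (\ref{SIT-ode-pulse-cst}) via the cooperative structure to get $X\leq X_M$, translate the threshold $M_{T_1}^{per}$ into $M_{T_1}$, and then invoke Theorem~\ref{SIT-ode-theorem}. If anything, you are more explicit than the paper about justifying the non-autonomous Kamke comparison and the passage from the impulsive system to its periodic limit, which the paper simply asserts.
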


Using Theorem \ref{lemme-min-time-maths}, we deduce
\begin{theorem}\label{lemme-min-time-maths-pulse}
Let 
${\bf{0}}<\underline{Y}=(\underline{A},\underline{M},\underline{F})'<{E}_1(\underline{M}_{T}^*)$, as defined in (\ref{defY}), for a given target release amount, $\underline{M}_{T}^*<M_{T_1}^{per}$. The following results hold
\begin{itemize}
\item First, assuming massive releases, with $\tau\Lambda^* >M_{T_1}^{per}$, then $M_{T}^{per}$ converges from $E^*$ to \underline{Y} in a finite time $t^*>0$. 
\item Second, assuming small releases, with $\tau\Lambda^* =\underline{M}_{T}^*$, then, for $t>t^*$, $M^{per}_T(t)<\underline{Y}$ and $\lim\limits_{t\rightarrow+\infty}M^{per}_T(t)=\bf{0}$.

\end{itemize}
\end{theorem}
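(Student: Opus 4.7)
My plan is to reduce both parts of the theorem to the already-established constant-release results via the comparison system (\ref{SIT-ode-pulse-cst}). The key observation is that $M_T^{per}(t)\geq \underline{M}_T$ for all $t\geq 0$, and since the mating-success factor $M/(M+M_T)$ is decreasing in $M_T$, the right-hand side of the $F$-equation in (\ref{SIT-ode-pulse-2}) is pointwise dominated by its counterpart in (\ref{SIT-ode-pulse-cst}). Because both systems are cooperative on $\mathcal{D}$, the standard comparison principle yields $X(t)\leq X_M(t)$ for matching initial data, where $X$ solves (\ref{SIT-ode-pulse-2}) and $X_M$ solves (\ref{SIT-ode-pulse-cst}). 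I read the two occurrences of $M_T^{per}$ in the conclusion as the wild-population solution $X$ of (\ref{SIT-ode-pulse-2}), since the target level $\underline{Y}$ lives in the wild-vector state space.

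For the first bullet, I would start both $X$ and $X_M$ at $E^*$ with pulse amplitude $\tau\Lambda^*>M_{T_1}^{per}$. Inverting the formula $\underline{M}_T=\tau\Lambda^* e^{-\mu_T\tau}/(1-e^{-\mu_T\tau})$ shows this is equivalent to $\underline{M}_T>M_{T_1}$, so Proposition \ref{SIT-Pulse-proposition}(i) applied to (\ref{SIT-ode-pulse-cst}) yields $X_M(t)\to \textbf{0}$. The sandwich $\textbf{0}\leq X(t)\leq X_M(t)$ then forces $X(t)\to \textbf{0}$. Continuity of $X$ together with $X(0)=E^*>\underline{Y}>\textbf{0}$ produces a first finite time $t^*>0$ at which $X(t^*)\leq \underline{Y}$ componentwise, which is the claimed finite entry time.

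For the second bullet I switch the pulse amplitude at $t^*$ to $\tau\Lambda^*=\underline{M}_T^*<M_{T_1}^{per}$, so that the associated constant comparison parameter satisfies $\underline{M}_T<M_{T_1}$. Let $X_M$ now denote the solution of (\ref{SIT-ode-pulse-cst}) for this new $\underline{M}_T$ restarted at $\underline{Y}\geq X(t^*)$. Since $\underline{Y}<\underline{E}_1$, Proposition \ref{SIT-Pulse-proposition}(iii) places this initial datum strictly inside the basin $[\textbf{0},\underline{E}_1)$ of $\textbf{0}$, so $X_M(t)\to \textbf{0}$. Monotonicity of (\ref{SIT-ode-pulse-cst}) applied to the comparison $X_M(0)<\underline{E}_1$ with the constant trajectory $\underline{E}_1$ gives $X_M(t)<\underline{E}_1$ for all $t\geq t^*$, so the inequality $X\leq X_M$ transfers both the trapping $X(t)<\underline{E}_1$ and the limit $X(t)\to \textbf{0}$ to the pulse trajectory.

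The main technical obstacle is to chain the two regimes at $t^*$ rigorously: the comparison solution $X_M$ must be legitimately restarted at $\underline{Y}$ (a componentwise majorant of $X(t^*)$) while preserving $X\leq X_M$ on $[t^*,\infty)$, which requires the cooperative structure of (\ref{SIT-ode-pulse-2}) to be invoked simultaneously with that of the autonomous comparison system. A secondary subtlety is that immediately after the switch the treated-male dynamics are not yet on their new periodic attractor $M_T^{per}$ but only converging to it; absorbing this short transient into the definition of $t^*$, or invoking openness of the trapping box $[\textbf{0},\underline{E}_1)$, should handle it cleanly.
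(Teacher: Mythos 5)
Your proposal follows exactly the route the paper intends: the paper records no proof beyond the phrase ``Using Theorem \ref{lemme-min-time-maths}, we deduce'', and the argument it is compressing is precisely yours --- compare the periodic system (\ref{SIT-ode-pulse-2}) with the constant-release system (\ref{SIT-ode-pulse-cst}) at the level $\underline{M}_T=\tau\Lambda/(e^{\mu_T\tau}-1)$ using cooperativity, note that $\tau\Lambda>M_{T_1}^{per}$ is equivalent to $\underline{M}_T>M_{T_1}$, and then invoke Theorem \ref{lemme-min-time-maths} and Proposition \ref{SIT-Pulse-proposition} in the massive- and small-release regimes respectively. Your reading of the abusive notation $M_T^{per}$ in the conclusion as the wild-population trajectory, and the two caveats you flag (chaining the comparison at $t^*$ and the transient of $M_T$ before it reaches its periodic attractor), are consistent with --- and in fact more careful than --- what the paper writes down.
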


Theorems \ref{lemme-min-time-maths-pulse} and \ref{lemme-min-time-maths} give us a strategy to drive, in a finite time, and keep the wild vector population under a given threshold value $\underline{Y}$, for a targeted amount of sterile male releases, namely $\underline{M}_{T}^*$: first, massive releases for several weeks, and then small releases according to $\underline{M}_{T}^*$. 
They are illustrated in the forthcoming section, both for constant and periodic impulsive releases.

\section{Numerical simulations}\label{Numerical-simulations}
In this part, we consider a specific application of SIT against mosquito, like anopheles or aedes spp. We consider mosquito parameter values given in Table
\ref{Table-valeurs-parametre}.
\begin{table}[H]
  \centering
  \begin{tabular}{|c|c|c|c|c|c|c|c|}
  \hline
  Symbol & $\phi$ & $\mu_{A,1}$ & $\mu_{A,2}$ & $r$ & $\mu_F$ & $\mu_M$ & $\mu_T$ \\
  \hline
  Value & 10 & 0.05 & 2$\times10^{-4}$ & 0.49 & 0.1 & 0.14 & 0.14 \\
  \hline
\end{tabular}
\caption{Entomological parameter values
}\label{Table-valeurs-parametre}
\end{table}
In Table
\ref{Table-Wild-equilibrium} we provide several computations, related to the maturation rate, $\gamma$. We derive the wild (positive) equilibrium
$E^*=(A^*,M^*,F^*)'$ according (\ref{Wild-equilibria-definition}). These wild equilibria will be used as the initial data for forthcoming simulations. 
In addition, we also display in Table \ref{Table-Wild-equilibrium}  the thresholds related to the global asymptotic stability of \textbf{0} with constant release ($M_{T_1}$) and periodic pulse release ($M_{T_1}^{per}$).

\begin{table}[H]
  \centering
  \begin{tabular}{|c|c|c|c|c|}
\hline
  $\gamma$ & 0.04   & 0.06  & 0.08   & 0.1 \\
  \hline\hline
$\mathcal{R}$        & 21.78   & 26.73     & 30.15    & 32.67 \\
    \hline\hline
$A^*$      & 9350   & 14150  & 18950 & 23750 \\
   \hline
$M^*$      & 1335  & 3031   & 5412 & 8479 \\
   \hline
$F^*$      & 1834 & 4160 & 7428  & 11637 \\
\hline\hline
  $M_{T_1}$  & 863.9   & 2048   & 3745  & 5954 \\
   \hline
  $M_{T_1}^{per}$  & 1484.5   & 3519.8   & 6434.3  & 10230 \\
    \hline
\end{tabular}
\caption{Wild equilibrium $E^*=(A^*,M^*,F^*)'$ and Threshold values for $\gamma$ with periodic treatment $\tau=7$ days.
   }\label{Table-Wild-equilibrium}
\end{table}

We now compute the minimal time $\tau(M_T^*)$ necessary for the solution
of system (\ref{SIT-ode}) initiated at the wild equilibrium $E^*$ to
enter in the box $[{\bf{0}}, E_1)$ (see Figure
\ref{ODE-TIS-3D-cont}-(b) and figure \ref{ODE-TIS-3D-pulse}-(b)).
Existence of such time was proved in Theorem 
\ref{lemme-min-time-maths}.
 \par

In Table \ref{Table-Positive-equilibrium}, for a given amount of sterile males to release, $\underline{M}_T^*$, we provide the values of the positive unstable equilibrium $E_1=(A_1,M_1,F_1)'$. This is needed to define $\underline{Y}=(A_1-\varepsilon,
F_1-\varepsilon,M_1-\varepsilon)'$, for a given $\varepsilon>0$, and thus to estimate the minimal time. In the forthcoming simulations, we set $\varepsilon=0.1$. 

\begin{table}[H]
  \centering
  \begin{tabular}{|c|c|c|c|}
\hline
  \backslashbox{$\gamma$}{$\underline{M}_T^*$} & 100 & 500 & 800\\
  \hline
0.04   & (36.59,5.2,0.36)' & (283.11,40.43,4.15)'  & (878.68,125.48,23.35)'  \\
  \hline
0.06   & (18.79,4.03,0.21)' & (109.67,23.49,1.45)'  & (201.11,43.1,3.02)'  \\
   \hline

0.08   & (12.24,3.5,0.16)' & (66.42,18.97,0.95)' & (113.54,32.4,1.7)' \\
   \hline
0.1   & (8.95,3.2,0.14)' & (47.1,16.8,0.75)' & (78.4,27.9,1.3)' \\
   \hline
\end{tabular}
\caption{Values of the  positive (unstable) equilibrium
$E_1=(A_1,M_1,F_1)'$ that corresponds to
  the targeted release $\underline{M}_T^*$ and $\gamma$.}\label{Table-Positive-equilibrium}
\end{table}

The next simulations are done using standard odes routines, implemented in Matlab.
\subsection{Minimal time in the case of continuous and constant releases}
We consider massive constant releases, $M_T^*=k\times M_{T_1}$ (see Table \ref{Table-Wild-equilibrium} for $M_{T_1}$). The minimal entry time for different values of $k$, $\gamma$, and $\underline{M}_T^*$ are summarized in Tables \ref{Table-time-01} and \ref{Table-time-1}. 
 
\begin{table}[H]
  \centering
  \begin{tabular}{|c|c|c|c|c|c|c|c|c|c|c|c|}
\cline{1-4}\cline{6-8}\cline{10-12}
\multicolumn{4}{|c|}{$k=1.001$}&&\multicolumn{3}{|c|}{$k=1.01$}&&\multicolumn{3}{|c|}{$k=1.1$}\\
  \cline{1-4}\cline{6-8}\cline{10-12}
  \backslashbox{$\gamma$}{$\underline{M}_T^*$} & 100 & 500 & 800 && 100 & 500 & 800&& 100 & 500 & 800\\
  \cline{1-4}\cline{6-8}\cline{10-12}
0.04   & 6959 & 6889  & 6719 && 2159 & 2090 & 1929 && 656 & 592 & 479 \\
   \cline{1-4}\cline{6-8}\cline{10-12}
0.08   & 7151 & 7123 & 7112 && 2224 & 2196 & 2184 && 685 & 658 & 647 \\
   \cline{1-4}\cline{6-8}\cline{10-12}
\end{tabular}
\caption{The case of continuous and constant release. Numerical
  estimates of the minimal times (in days) to reach $\underline{Y}$
  We set $\varepsilon=0.1$ using massive releases, $M_T^*=k\times
  M_{T_1}$.}\label{Table-time-01}
\end{table}
\begin{table}[H]
  \centering
\begin{tabular}{|c|c|c|c|c|c|c|c|c|c|c|c|c|c|c|c|}
\cline{1-4}\cline{6-8}\cline{10-12}\cline{14-16}
\multicolumn{4}{|c|}{$k=1.2$}&&\multicolumn{3}{|c|}{$k=2$}&&\multicolumn{3}{|c|}{$k=5$}&&\multicolumn{3}{|c|}{$k=10$}\\
  \cline{1-4}\cline{6-8}\cline{10-12}\cline{14-16}
  \backslashbox{$\gamma$}{$\underline{M}_T^*$} & 100 & 500 & 800 && 100 & 500 & 800&& 100 & 500 & 800&& 100 & 500 & 800\\
  \cline{1-4}\cline{6-8}\cline{10-12}\cline{14-16}
0.04   & 460 & 399  & 311 && 217 & 169 & 126 && 141 & 103 & 76 && 123 & 88 & 65 \\
\cline{1-4}\cline{6-8}\cline{10-12}\cline{14-16}
0.06   & 476 & 442  & 426 && 232 & 201 & 188 && 155 & 128 & 117 && 137 & 111 & 101 \\
   \cline{1-4}\cline{6-8}\cline{10-12}\cline{14-16}
0.08   & 485 & 458 & 447 && 239 & 214 & 205&& 162 & 139 & 131 && 144 & 122 & 114\\
   \cline{1-4}\cline{6-8}\cline{10-12}\cline{14-16}
0.1   & 489 & 465 & 457 && 244 & 221 & 213&& 167 & 146 & 139 && 149 & 129 & 122\\
   \cline{1-4}\cline{6-8}\cline{10-12}\cline{14-16}
\end{tabular}
\caption{The case of continuous and constant release. Numerical
  estimates of the minimal times (in days) to reach $\underline{Y}$, using massive releases, $M_T^*=k\times
  M_{T_1}$.}
  \label{Table-time-1}
\end{table}

For different values of $\underline{M}_T^*$, an increase in the size of the massive releases implies a decay of the minimal time to enter $[{\bf{0}}, E_1)$. Of course lower is the value of $\underline{M}_T^*$, longer is the duration of the massive releases. However, it is interesting to notice that between $k=5$ (where $M_T^*\in [4320,29770]$) and $k=10$ (where $M_T^*\in [8640,59540]$), the gain of time is  very weak if we take into account the cost and, eventually, a possible limitation in the production capacity of the sterile males. Last but not least, when $\underline{M}_T^*=100$ the impact of $\gamma$ on the minimal time, is limited.

To illustrate the trajectory of the SIT system in the constant release case, we provide in  a 3D-view, the trajectory related to $\gamma=0.04$ and $k=5$ (see Figure \ref{ODE-TIS-3D-cont}).
\begin{figure}[H]
    \centering
    \subfloat[][]{  \includegraphics[scale=0.32]{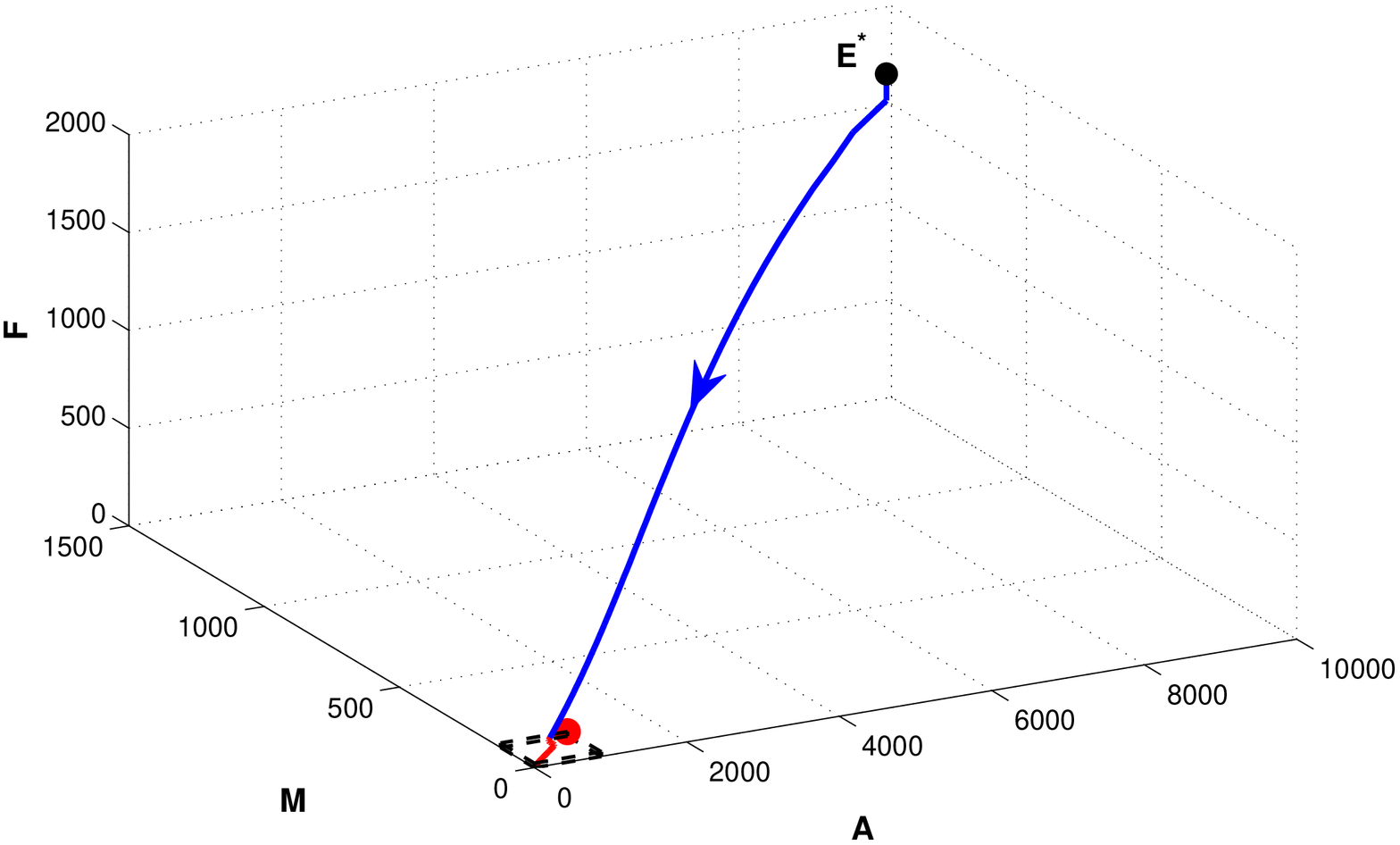}}
    \vspace{0.2cm}
    \subfloat[][]{  \includegraphics[scale=0.32]{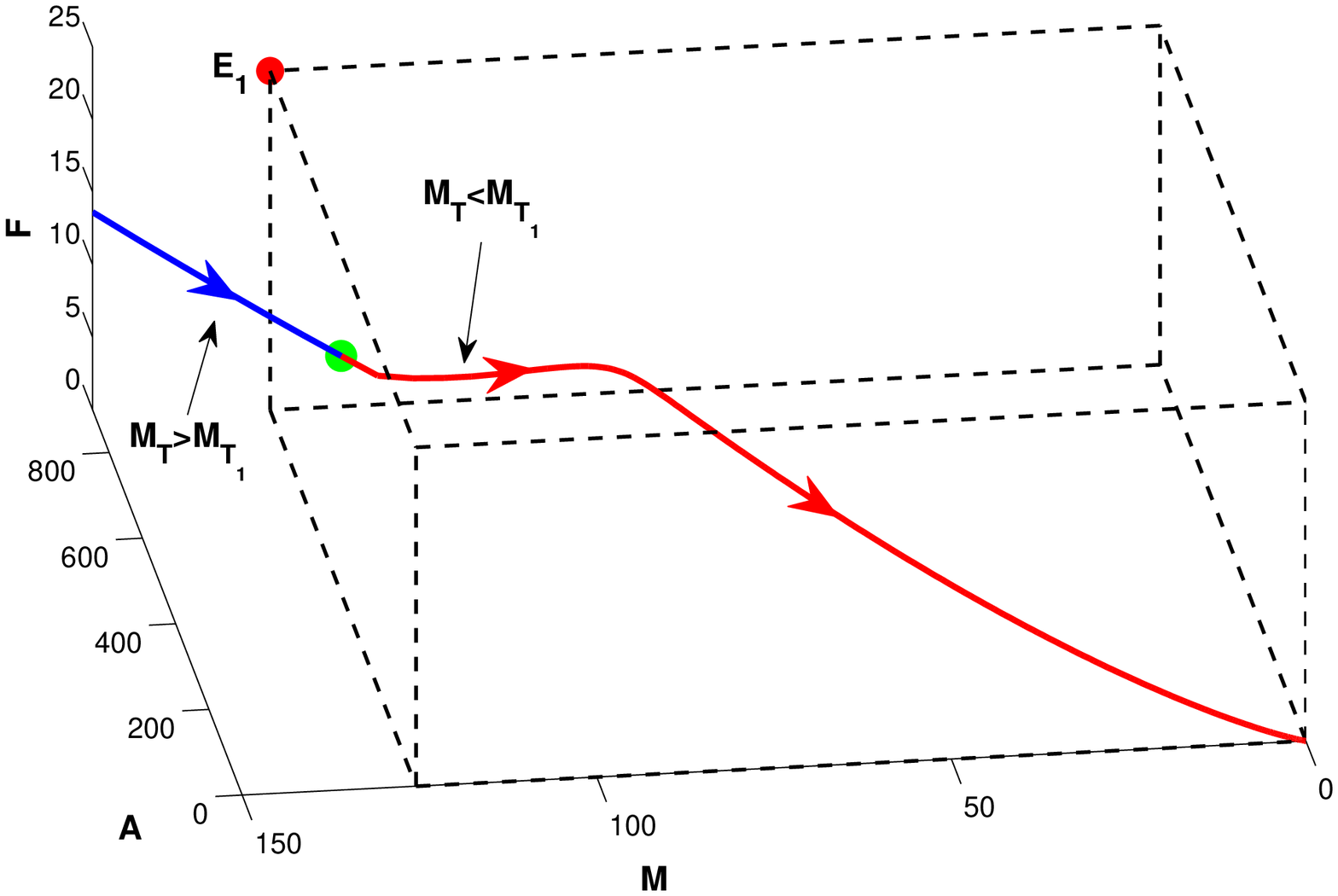}}
    \caption{The case of continuous and constant release. (a): 3D plot of the trajectory of system (\ref{SIT-ode})
    initiated at the wild equilibrium $E^*=(9350,1335,1834)'$ (black
    dot). (b): Zoom in around the box delimited by the positive unstable
    equilibrium $\underline{E}_1=(878.68,125.48,23.35)'$ (red dot).
    The green dot with coordinates (633.2,121.2,10.85)' corresponds
    to the start of the targeted release $\underline{M}_T^*$.
    $\gamma=0.04$, $k=5$, $M_{T_1}=863.9$ and $\underline{M}_T^*=800$.}
    \label{ODE-TIS-3D-cont}
\end{figure}
Note that the red trajectory continues to decay to $\bf{0}$ (because of the LAS of $\bf{0}$), but this is very slow. However, the main objective is achieved: \tcm{to }maintain the wild population below $E_1$.

\subsection{Minimal time in the case of periodic pulse releases}
We consider that releases are done every week, i.e. $\tau=7$. Thus for a given $\tau$, we choose $\Lambda$ such that $\tau \Lambda > M^{per}_{T_1}$. In Table \ref{Table-time-1-pulse}, we provide the results for different values of $k$, $\gamma$, and $\underline{M}_T^*$.
\begin{table}[H]
  \centering
  \begin{tabular}{|c|c|c|c|c|c|c|c|c|c|c|c|c|c|c|c|}
\cline{1-4}\cline{6-8}\cline{10-12}\cline{14-16}
\multicolumn{4}{|c|}{$k=1.2$}&&\multicolumn{3}{|c|}{$k=2$}&&\multicolumn{3}{|c|}{$k=5$}&&\multicolumn{3}{|c|}{$k=10$}\\
  \cline{1-4}\cline{6-8}\cline{10-12}\cline{14-16}
  \backslashbox{$\gamma$}{$\underline{M}_T^*$} & 100 & 500 & 800 && 100 & 500 & 800&& 100 & 500 & 800&& 100 & 500 & 800\\
  \cline{1-4}\cline{6-8}\cline{10-12}\cline{14-16}
0.04   & 213 & 166  & 123 && 166 & 120 & 88 && 127 & 91 & 67 && 117 & 83 & 61 \\
   \cline{1-4}\cline{6-8}\cline{10-12}\cline{14-16}
   0.06   & 228 & 195  & 184 && 175 & 147 & 135 && 140 & 114 & 104 && 130 & 105 & 95 \\
   \cline{1-4}\cline{6-8}\cline{10-12}\cline{14-16}
0.08   & 235 & 210 & 201 && 183 & 159 & 150&& 148 & 125 & 118 && 138 & 116 & 108 \\
   \cline{1-4}\cline{6-8}\cline{10-12}\cline{14-16}
0.1  & 240 & 218 & 210 && 187 & 166 & 159&& 152 & 132 & 125 && 142 & 122 & 115\\
   \cline{1-4}\cline{6-8}\cline{10-12}\cline{14-16}
\end{tabular}
\caption{Periodic impulsive releases are done every 7 days. Numerical
  estimates of the minimal times (in days) to reach $\underline{Y}$, using massive periodic impulsive releases, $M_T^*=
  \Lambda\tau\geq k\times M_{T_1}^{per}$}
  \label{Table-time-1-pulse}
\end{table}

In Figure \ref{ODE-TIS-3D-pulse}, we illustrate the periodic impulsive SIT control for $\gamma=0.04$ and $k=5$. First, with massive periodic releases, followed by small periodic releases. Again, the red trajectory indicates that the system converges (but very slowly) to $\bf{0}$.

\begin{figure}[H]
    \centering
    \subfloat[][]{  \includegraphics[scale=0.32]{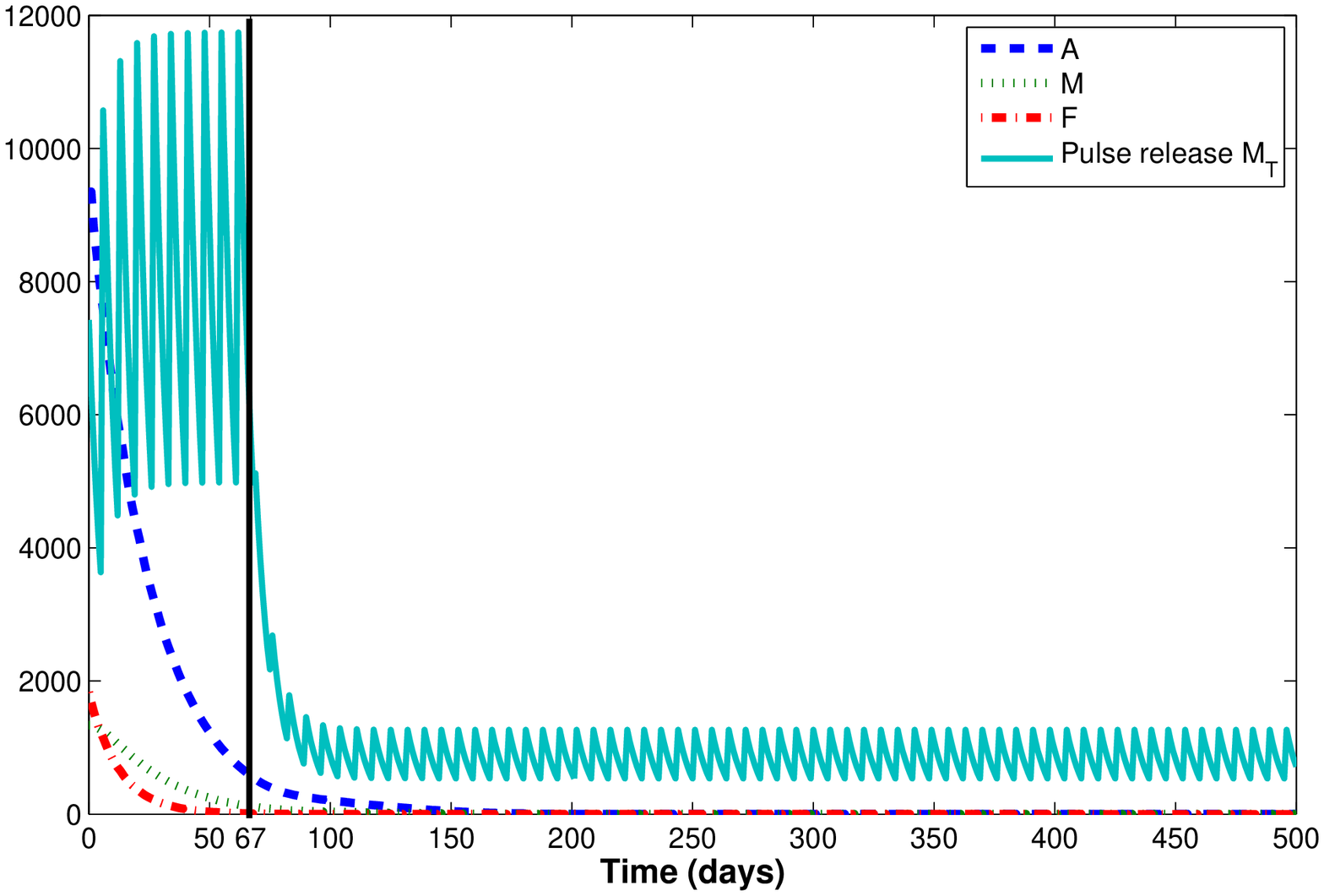}}
    \vspace{0.2cm}
    \subfloat[][]{  \includegraphics[scale=0.32]{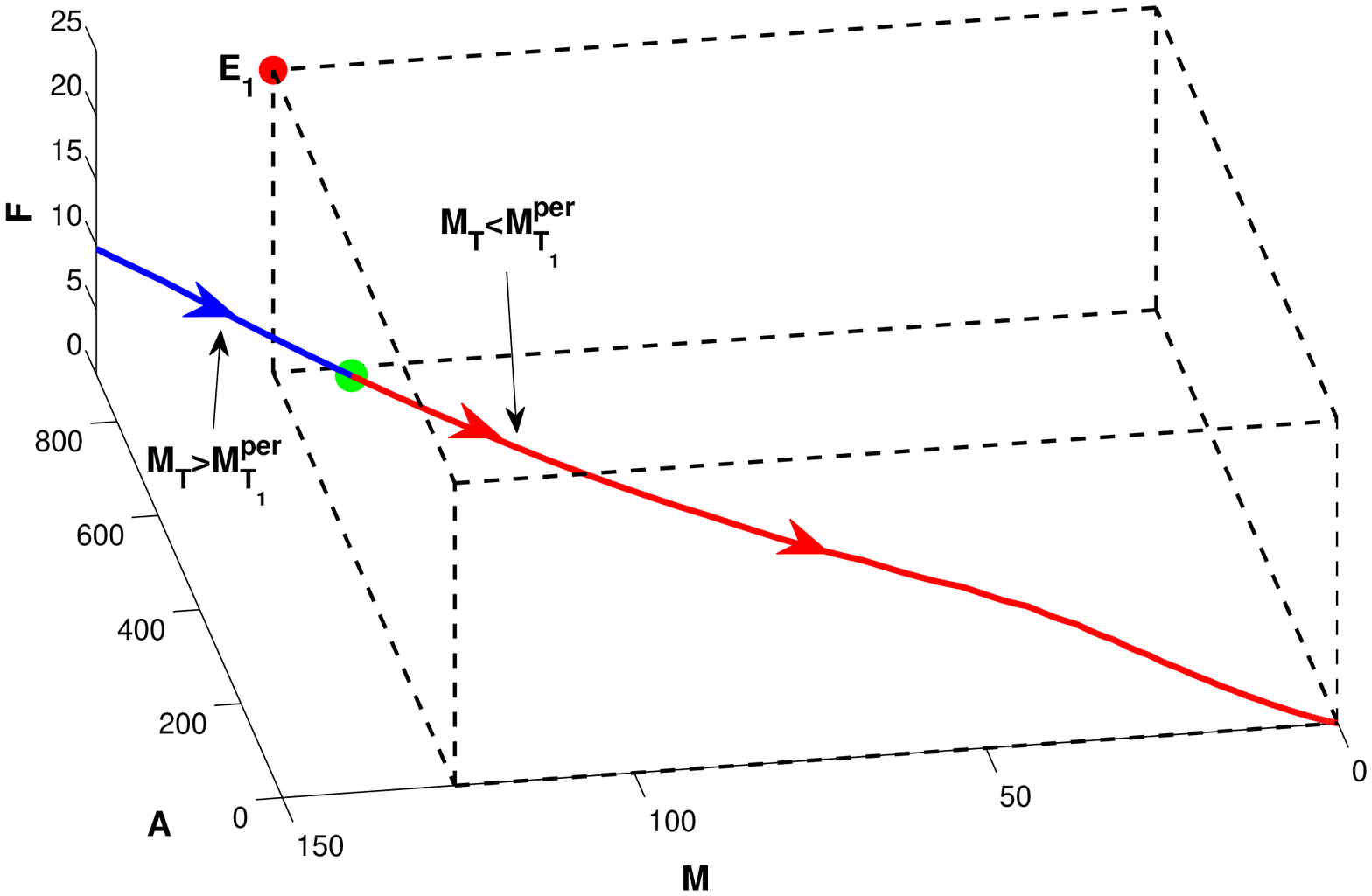}}
    \caption{The case of periodic pulse release. Releases are done every 7 days. (a): Time-serie of the trajectory of system (\ref{SIT-ode})
    initiated at the wild equilibrium $E^*$.
    The solid vertical black line denotes the shift from massive release to targeted release (b): Zoom in around the box delimited by the positive unstable
    equilibrium $\underline{E}_1=(878.68,125.48,23.35)'$ (red dot). The green dot with coordinates ( 604.57,122.4, 9.57)' corresponds
    to the start of the targeted release $\underline{M}_T^*$.
    $\gamma=0.04$, $k=5$, $M_{T_1}^{per}=1484.5$ and $\underline{M}_T^*=800$.}
    \label{ODE-TIS-3D-pulse}
\end{figure}

Comparing the results between Table \ref{Table-time-1-pulse} and Table \ref{Table-time-1}, clearly shows similar results. In fact, the periodic impulsive case is strongly related to the constant release case, thanks to the fact that $<M_{T}^{per}>=\frac{1}{\tau}\int_{t_n}^{t_n+\tau} M_T^{per}(t)dt=\frac{\Lambda}{\mu_T}=M_T^*$. Thus, releasing $\tau \Lambda$ sterile individuals every $\tau$ days is equivalent of releasing a constant amount, $M_T$, of sterile males over the same period. Thus, since $M_{T_1}^{per}=k\times (e^{\mu_T \tau}-1)M_{T_1}$, as long as $k\times (e^{\mu_T \tau}-1)>1$, choosing $\Lambda$ such that $\tau \Lambda >k(e^{\mu_T \tau}-1)M_{T_1}$, is equivalent of choosing $M_T^*=k\times M_{T_1}$. That is why values of $k$ smaller than $1$ can be considered too.
In Tables \ref{Table-time-1-pulse-2}-\ref{Table-time-1-pulse-3}, we provide estimates of the minimal time for $k<1$.
When  $k<0.58$, we did not observe (numerically) convergence towards \textbf{0}.

However, like for the constant releases case, the larger the value of $k$, the lowest the time necessary to enter $[{\bf{0}}, E_1)$. Values of $k$ chosen between $2$ and $5$ seem the most interesting ones.

\begin{table}[H]
  \centering
  \begin{tabular}{|c|c|c|c|c|c|c|c|c|c|c|c|}
\cline{1-4}\cline{6-8}\cline{10-12}
\multicolumn{4}{|c|}{$k=0.58$}&&\multicolumn{3}{|c|}{$k=0.6$}&&\multicolumn{3}{|c|}{$k=0.7$}\\
  \cline{1-4}\cline{6-8}\cline{10-12}
  \backslashbox{$\gamma$}{$\underline{M}_T^*$} & 100 & 500 & 800 && 100 & 500 & 800&& 100 & 500 & 800\\
  \cline{1-4}\cline{6-8}\cline{10-12}
0.04   & 3073 & 3003  & 2827 && 1065 & 997 & 852 && 449 & 388 & 300\\
\cline{1-4}\cline{6-8}\cline{10-12}
0.06   & 3732 &  3696  &  3677 && 1111  & 1075  & 1057  && 467  &  433 & 416 \\
   \cline{1-4}\cline{6-8}\cline{10-12}
0.08   & 4322 & 4294 & 4282 && 1137 & 1109 & 1098 && 477 & 450 & 439\\
\cline{1-4}\cline{6-8}\cline{10-12}
0.1   & $\infty$  &  $\infty$  & $\infty$  && 1215  &  1191 & 1182  && 486  & 462  & 454 \\
   \cline{1-4}\cline{6-8}\cline{10-12}
\end{tabular}
\caption{Periodic impulsive releases are done every 7 days. Numerical
  estimates of the minimal times (in days) to reach $\underline{Y}$, using massive periodic impulsive releases, $M_T^*=
  \Lambda\tau\geq k\times M_{T_1}^{per}$. The symbol $\infty$ denotes that the result is greater than $10^6$.}
  \label{Table-time-1-pulse-2}
\end{table}

\begin{table}[H]
  \centering
  \begin{tabular}{|c|c|c|c|c|c|c|c|c|c|c|c|}
\cline{1-4}\cline{6-8}\cline{10-12}
\multicolumn{4}{|c|}{$k=0.8$}&&\multicolumn{3}{|c|}{$k=0.9$}&&\multicolumn{3}{|c|}{$k=1$}\\
  \cline{1-4}\cline{6-8}\cline{10-12}
  \backslashbox{$\gamma$}{$\underline{M}_T^*$} & 100 & 500 & 800&& 100 & 500 & 800&& 100 & 500 & 800\\
  \cline{1-4}\cline{6-8}\cline{10-12}
0.04   & 335 & 278 & 210 && 282 & 228 & 171 && 250 & 199 & 148 \\
\cline{1-4}\cline{6-8}\cline{10-12}
0.06   & 351 &  318  &  302 && 297  & 265  & 250  && 265  &  234 & 219 \\
   \cline{1-4}\cline{6-8}\cline{10-12}
0.08   & 359 & 333 & 323&& 305 & 279 & 269 && 273 & 247 & 237 \\
   \cline{1-4}\cline{6-8}\cline{10-12}
   0.1   & 366  &  343  & 334  && 311  &  288 & 279  && 278  & 255  & 247 \\
   \cline{1-4}\cline{6-8}\cline{10-12}
\end{tabular}
\caption{Periodic impulsive releases are done every 7 days. Numerical estimates of the minimal times (in days) to reach $\underline{Y}$, using massive periodic impulsive releases, $M_T^*=
  \Lambda\tau\geq k\times M_{T_1}^{per}$}
  \label{Table-time-1-pulse-3}
\end{table}

\subsection{Mechanical control or not?}
In general, using SIT alone is not efficient. It is preferable to consider other bio-control tools. Against mosquito, it has been showed that mechanical control (MC), which consists of removing the breeding sites, can be an additional efficient control tool \cite{Dumont2010,Dumont2008}, and in particular coupled with SIT \cite{Dumont2012}. This is a cheap control, but it requires the support of the local population.

We now assume that the MC leads an increase of $\mu_{A,2}$, that is a decrease of the wild aquatic stage equilibrium $A^*$ (see Table
\ref{Table-Wild-equilibrium} for values of $A^*$).  According to relation (\ref{Wild-equilibria-definition}), page \pageref{Wild-equilibria-definition}, we deduce that reducing $A^*$ for $MC\%$ corresponds to an increase of $\mu_{A,2}$ as follows
\begin{equation}\label{reduction-mecanique}
 \begin{array}{rcl}
\mu_{A,2,MC}&=& \displaystyle\frac{(\gamma+\mu_{A,1})}{(1-\frac{MC}{100})A^*}(\mathcal{R}-1).\\
 \end{array}
\end{equation}
In Table \ref{Table-reduction-lutte-mecanique}, we provide $\mu_{2,A,MC}$ and the wild equilibrium $E^*_{MC}$, for $MC=0$, $20\%$ and $40\%$ in (\ref{reduction-mecanique}).

\begin{table}[H]
  \centering
  \begin{tabular}{|c|c|c|c|c|c|c|c|c|c|c|c|c|}
    \hline
    \multicolumn{5}{|c|}{$MC=0$} &\multicolumn{4}{|c|}{$MC=20$}&\multicolumn{4}{|c|}{$MC=40$} \\
    \hline
    $\mu_{A,2,MC}$  & \multicolumn{4}{|c|}{$2\times 10^{-4}$} &  \multicolumn{4}{|c|}{$2.5\times 10^{-4}$} &  \multicolumn{4}{|c|}{$3.3333\times 10^{-4}$} \\
    \hline
    $\gamma$ & 0.04 & 0.06 & 0.08 & 0.1 & 0.04 & 0.06 & 0.08 & 0.1 & 0.04 & 0.06 & 0.08 & 0.1\\
    \hline
    $A^*$ & 9350 & 14150 & 18950 & 23750  &  7480 & 11320 & 15160 & 19000   &  5610 & 8490 & 11370 & 14250 \\
    \hline
    $M^*$ & 1335 & 3031 & 5412 & 8479     &  1068 & 2425 & 4330 & 6783   &  801 & 1819 & 3247 & 5087 \\
    \hline
    $F^*$ & 1834 & 4160 & 7428 & 11637    &  1466 & 3328 & 5943 & 9310   &  1010 & 2496 & 4457 & 6983 \\
    \hline
  \end{tabular}
  \caption{Impact of MC on the wild equilibrium $E^*_{MC}$}\label{Table-reduction-lutte-mecanique}
\end{table}
Clearly, the impact of MC on the wild equilibrium is quite obvious. However, MC can be  limited in space and time. 

Since the objective of massive SIT release is to enter (rapidly) in $[{\bf{0}},E_1)$, it is also interesting to see the impact of MC treatment on the unstable equilibrium, $E_{1,MC}$, for a given targeted amount of sterile males, $\underline{M}_T^*$. This is summarized in Tables \ref{Table-Positive-equilibrium-MC-1} and \ref{Table-Positive-equilibrium-MC-2}. In fact, and this is a good news, we have $E_{1,MC}>E_1=E_{1,0}$. Thus, with MC, the wild equilibrium, $E^*_{MC}$, decreases and the size of $[{\bf{0}},E_1)$ increases, such that we can expect a good gain in terms of minimal time to enter in  $[{\bf{0}},E_1)$, using massive releases.

\begin{table}[H]
  \centering
  \begin{tabular}{|c|c|c|}
\hline
  \backslashbox{$\gamma$}{$\underline{M}_T^*$} & 100 & 500\\
  \hline
0.04   & (37.39,5.34,0.37)' & (347.57,49.56,6.14)'  \\
  \hline
0.06   & (18.96,4.06,0.22)' & (115.79,24.8,1.6)'   \\
   \hline

0.08   & (12.3,3.51,0.163)' & (68.24,19.49,1)'   \\
   \hline
0.1   & (8.98,3.21,0.137)' & (47.88,17.09,0.78)'   \\
   \hline
\end{tabular}
\caption{Values of $E_{1,MC}$ for different values of the targeted releases amount, $\underline{M}_T^*$, and various values of $\gamma$, when $MC=20\%$. }\label{Table-Positive-equilibrium-MC-1}
\end{table}
\begin{table}[H]
  \centering
  \begin{tabular}{|c|c|c|}
\hline
  \backslashbox{$\gamma$}{$\underline{M}_T^*$} & 100 & 500\\
  \hline
0.04   & (38.82,5.54,0.4)' & (646.33,92.3,19.7)'  \\
  \hline
0.06   & (19.25,4.12,0.22)' & (127.8,24.37,1.9)'   \\
   \hline

0.08   & (12.4,3.54,0.166)' & (71.5,20.4,1.1)'   \\
   \hline
0.1   & (9.03,3.22,0.138)' & (49.2,17.58,0.82)'   \\
   \hline
\end{tabular}
\caption{Values of $E_{1,MC}$ for different values of the targeted releases amount, $\underline{M}_T^*$, and various values of $\gamma$, when $MC=40\%$. }\label{Table-Positive-equilibrium-MC-2}
\end{table}

Minimal time results are given in Tables \ref{Table-time-full-MC}-\ref{Table-time-full-MC-2}, when we consider that MC has started before SIT and goes on once SIT starts. Clearly, the gain in time is "small", indicating that MC does not drastically decay the minimal time to reach $[{\bf{0}},E_1)$.
\begin{table}[H]
  \centering
 \begin{tabular}{|c|c|c|c|c|c|c|c|c|}
\hline
\multicolumn{9}{|c|}{The case of continuous and constant release}\\
\hline
 \cline{1-3}\cline{5-6}\cline{8-9}
\multicolumn{3}{|c|}{$k=2$}&&\multicolumn{2}{|c|}{$k=5$}&&\multicolumn{2}{|c|}{$k=10$}\\
  \cline{1-3}\cline{5-6}\cline{8-9}
  \backslashbox{$\gamma$}{$\underline{M}_T^*$} & 100 & 500  && 100 & 500 && 100 & 500  \\
   \cline{1-3}\cline{5-6}\cline{8-9}
0.04   & 213(4) & 155(14) && 137(4) & 93(10) && 120(3) & 80(8) \\
    \cline{1-3}\cline{5-6}\cline{8-9}
0.06   &  228(4) & 195(6)  && 152(3)  &  123(5) && 134(3)  & 107(4) \\
   \cline{1-3}\cline{5-6}\cline{8-9}
0.08  & 236(3) & 210(4) && 160(2) & 136(3) && 141(3) & 118(4) \\
    \cline{1-3}\cline{5-6}\cline{8-9}
0.1   & 241(3)  & 218(3)  && 164(3)  &  143(3) && 146(3)  & 125(4) \\
    \cline{1-3}\cline{5-6}\cline{8-9}
   \hline
\multicolumn{9}{|c|}{The case of periodic pulse release}\\
\hline
 \cline{1-3}\cline{5-6}\cline{8-9}
\multicolumn{3}{|c|}{$k=2$}&&\multicolumn{2}{|c|}{$k=5$}&&\multicolumn{2}{|c|}{$k=10$}\\
   \cline{1-3}\cline{5-6}\cline{8-9}
  \backslashbox{$\gamma$}{$\underline{M}_T^*$} & 100 & 500  && 100 & 500 && 100 & 500 \\
   \cline{1-3}\cline{5-6}\cline{8-9}
0.04   & 157(9) & 109(11) && 123(4) & 82(9)  && 113(4) & 75(8) \\
   \cline{1-3}\cline{5-6}\cline{8-9}
   0.06   & 172(3)  & 142(5)  && 137(3)  & 110(4)  && 127(3)  & 101(4) \\
   \cline{1-3}\cline{5-6}\cline{8-9}
0.08  & 180(3) & 155(4) && 145(3) & 122(3) && 135(3) & 112(4) \\
   \cline{1-3}\cline{5-6}\cline{8-9}
0.1   & 185(2)  & 162(4)  && 150(2)  &  129(3)  && 140(2)  & 119(3) \\
   \cline{1-3}\cline{5-6}\cline{8-9}
\hline
\end{tabular}
\caption{The case when $20\%$  of MC takes place all over the time. Numerical estimates of the minimal times (in days) to reach $\underline{Y}$, using massive releases, $M_T^*=k\times
  M_{T_1}$.  The values in the brackets indicate the gain in days compared to SIT alone.}
  \label{Table-time-full-MC}
\end{table}

\begin{table}[H]
  \centering
\begin{tabular}{|c|c|c|c|c|c|c|c|c|}
\hline
\multicolumn{9}{|c|}{The case of continuous and constant release}\\
\hline
 \cline{1-3}\cline{5-6}\cline{8-9}
\multicolumn{3}{|c|}{$k=2$}&&\multicolumn{2}{|c|}{$k=5$}&&\multicolumn{2}{|c|}{$k=10$}\\
  \cline{1-3}\cline{5-6}\cline{8-9}
  \backslashbox{$\gamma$}{$\underline{M}_T^*$} & 100 & 500  && 100 & 500 && 100 & 500  \\
   \cline{1-3}\cline{5-6}\cline{8-9}
0.04   & 206(11) & 116(53) && 132(9) & 70(33) && 114(10) & 60(28) \\
    \cline{1-3}\cline{5-6}\cline{8-9}
0.06   &  224(8) & 186(15)  && 147(8)  &  116(12) && 129(8)  & 100(11) \\
   \cline{1-3}\cline{5-6}\cline{8-9}
0.08  & 232(7) & 204(10) && 156(6) & 130(9) && 138(6) & 114(8) \\
    \cline{1-3}\cline{5-6}\cline{8-9}
0.1   & 237(7)  & 213(8)  && 161(6)  &  138(8) && 143(6)  & 121(8) \\
    \cline{1-3}\cline{5-6}\cline{8-9}
   \hline
\multicolumn{9}{|c|}{The case of periodic pulse release}\\
\hline
 \cline{1-3}\cline{5-6}\cline{8-9}
\multicolumn{3}{|c|}{$k=2$}&&\multicolumn{2}{|c|}{$k=5$}&&\multicolumn{2}{|c|}{$k=10$}\\
   \cline{1-3}\cline{5-6}\cline{8-9}
  \backslashbox{$\gamma$}{$\underline{M}_T^*$} & 100 & 500  && 100 & 500 && 100 & 500 \\
   \cline{1-3}\cline{5-6}\cline{8-9}
0.04   & 151(15) & 82(38) && 118(9) & 62(29)  && 108(9) & 57(26) \\
   \cline{1-3}\cline{5-6}\cline{8-9}
   0.06   & 167(8)  & 134(13)  && 133(7)  & 103(11)  && 123(7)  & 94(11) \\
   \cline{1-3}\cline{5-6}\cline{8-9}
0.08  & 176(7) & 149(10) && 141(7) & 117(8) && 131(7) & 107(9) \\
   \cline{1-3}\cline{5-6}\cline{8-9}
0.1   & 181(6)  & 158(8)  && 146(6)  &  125(7)  && 136(6)  & 115(7) \\
   \cline{1-3}\cline{5-6}\cline{8-9}
\hline
\end{tabular}
\caption{The case when $40\%$  of MC takes place all over the time. Numerical estimates of the minimal times (in days) to reach $\underline{Y}$, using massive releases, $M_T^*=k\times
  M_{T_1}$.  The values in the brackets indicate the gain in days compared to SIT alone.}
 \label{Table-time-full-MC-2}
\end{table}

MC is a useful tool. However, to be really efficient, whatever the type of releases, MC needs to reduce the potential breeding site by $40\%$.   

In fact, the combination of control strategies needs to be considered according to the location. In la R\'eunion, a french overseas department in the Indian Ocean where a SIT project is ongoing, there is a seasonal effect on the wild mosquito population \cite{dufourd2013}, such that the best period to start SIT is in September, when the size of the wild mosquito   population is low. In general there is a factor $10$ in the population estimates between the wet season (February-March) and the dry season (September) (see for instance \cite{legoff2019}). In Cali (Colombia), there is no seasonal effect, such that the wild population is more or less constant along the year. In order to use the SIT in an efficient manner in Cali, a population reduction is necessary. 

One possible way, and also recommended by IAEA (the International Atomic Energy Agency) for SIT control, is to first use insecticide to reduce the population by a factor $5$ or $10$, and then to use SIT control. This is what we consider now: during one week, before SIT starts, we combine MC and an adulticide treatment, assuming $100\%$ efficiency. 

In Tables \ref{Table-wild-equilibrium-MC-altudice} and \ref{Table-wild-equilibrium-MC-altudice-2}, we provide the values obtained after one week of adulticide treatment without and with MC.

\begin{table}[H]
  \centering
  \begin{tabular}{|c|c|c|c|c|}
    \hline
    \multicolumn{5}{|c|}{Adulticide during one week} \\
    \hline
   \multicolumn{5}{|c|}{$MC=0$} \\
   \hline
    $\gamma$ & 0.04 & 0.06 & 0.08 & 0.1 \\
    \hline
    $A_7$ & 1897.9 & 2645.1 & 3387.1 & 4114 \\
    \hline
    $M_7$ & 46.2 & 98.3 & 169.5 & 258.6 \\
    \hline
    $F_7$ & 49.3 & 105.4 & 182.2 & 278.2 \\
    \hline
  \end{tabular}
  \caption{Solution ($A_7$, $M_7$,$F_7$)' of the model after one week of adulticide treatment \tcm{only}}\label{Table-wild-equilibrium-MC-altudice}
\end{table}

\begin{table}[H]
  \centering
\begin{tabular}{|c|c|c|c|c|c|c|c|c|}
    \hline
    \multicolumn{9}{|c|}{Adulticide during one week} \\
    \hline
   \multicolumn{5}{|c|}{$MC=20$}&\multicolumn{4}{|c|}{$MC=40$} \\
   \hline
    $\gamma$ &  0.04 & 0.06 & 0.08 & 0.1 & 0.04 & 0.06 & 0.08 & 0.1\\
    \hline
    $A_7$ &   1518.4  &  2116 & 2709.7 & 3291.2 & 1138.9   &  1587.2 & 2032.5 & 2468.6 \\
    \hline
    $M_7$ &   37 & 78.6 & 135.6 & 206.9   &  27.7 & 59 & 101.7 & 155.2 \\
    \hline
    $F_7$ &  39.5 & 84.3 & 145.7 & 222.5   &  29.6 & 63.2 & 109.3 & 166.9 \\
    \hline
  \end{tabular}
  \caption{Solution ($A_7$, $M_7$,$F_7$)' of the model after one week of  adulticide treatment combined with MC.}
  \label{Table-wild-equilibrium-MC-altudice-2}
\end{table}

Clearly, according to the tables above, after one weak of adulticide treatment, the size of the mosquito population has been drastically reduced, such that the SIT treatment will now start at the point $X_7=(A_7,M_7,F_7)'$. That is why an impact on the minimal time to enter the basin $[{\bf{0}},E_{1,MC})$ is expected.

Indeed, Table \ref{Table-time-Adulticide-Only}, page \pageref{Table-time-Adulticide-Only}, clearly confirms that the gain in the entry time is rather important for the adulticide treatment only: it ranges from $35$ to $95$ days.

\begin{table}[H]
  \centering
\begin{tabular}{|c|c|c|c|c|c|c|c|c|}
\hline
\multicolumn{9}{|c|}{The case of continuous and constant release}\\
\hline
 \cline{1-3}\cline{5-6}\cline{8-9}
\multicolumn{3}{|c|}{$k=2$}&&\multicolumn{2}{|c|}{$k=5$}&&\multicolumn{2}{|c|}{$k=10$}\\
  \cline{1-3}\cline{5-6}\cline{8-9}
  \backslashbox{$\gamma$}{$\underline{M}_T^*$} & 100 & 500  && 100 & 500 && 100 & 500  \\
   \cline{1-3}\cline{5-6}\cline{8-9}
0.04   & 122(95) & 74(95) && 92(49) & 54(49) && 85(38) & 50(38) \\
    \cline{1-3}\cline{5-6}\cline{8-9}
0.06   &  137(95) & 107(94)  && 106(49)  &  79(49) && 98(39)  & 72(39) \\
   \cline{1-3}\cline{5-6}\cline{8-9}
0.08  & 146(93) & 121(93) && 113(49) & 90(49) && 105(39) & 83(39) \\
    \cline{1-3}\cline{5-6}\cline{8-9}
0.1   & 150(94)  & 128(93)  && 118(49)  &  97(49) && 110(39)  & 90(39) \\
    \cline{1-3}\cline{5-6}\cline{8-9}
   \hline
\multicolumn{9}{|c|}{The case of periodic pulse release}\\
\hline
 \cline{1-3}\cline{5-6}\cline{8-9}
\multicolumn{3}{|c|}{$k=2$}&&\multicolumn{2}{|c|}{$k=5$}&&\multicolumn{2}{|c|}{$k=10$}\\
   \cline{1-3}\cline{5-6}\cline{8-9}
  \backslashbox{$\gamma$}{$\underline{M}_T^*$} & 100 & 500  && 100 & 500 && 100 & 500 \\
   \cline{1-3}\cline{5-6}\cline{8-9}
0.04   & 101(65) & 60(60) && 87(40) & 51(40)  && 82(35) & 48(35) \\
   \cline{1-3}\cline{5-6}\cline{8-9}
   0.06   & 115(60)  & 87(60)  && 100(40)  & 74(40)  && 95(35)  & 70(35) \\
   \cline{1-3}\cline{5-6}\cline{8-9}
0.08  & 123(60) & 99(60) && 107(41) & 85(40) && 103(35) & 81(35) \\
   \cline{1-3}\cline{5-6}\cline{8-9}
0.1   & 127(60)  & 106(60)  && 112(40)  &  91(41)  && 107(35)  & 87(38) \\
   \cline{1-3}\cline{5-6}\cline{8-9}
\hline
\end{tabular}
\caption{Numerical estimates of the minimal times (in days) to reach $\underline{Y}$, using massive releases, $M_T^*=k\times
  M_{T_1}$.  The values in the brackets indicate the gain in days compared to SIT alone.}\label{Table-time-Adulticide-Only}
\end{table}

In Tables \ref{Table-time-MC-Adulticide-stop-1} and  \ref{Table-time-MC-Adulticide-stop-2}, we present the results when MC is combined with the adulticide treatment. As, expected, the results are improved. However, the gain, compared to the adulticide treatment alone is small, such that the best combination would be "adulticide treatment for seven days, followed by permanent SIT treatment".

\begin{table}[H]
  \centering
\begin{tabular}{|c|c|c|c|c|c|c|c|c|}
\hline
\multicolumn{9}{|c|}{The case of continuous and constant release}\\
\hline
 \cline{1-3}\cline{5-6}\cline{8-9}
\multicolumn{3}{|c|}{$k=2$}&&\multicolumn{2}{|c|}{$k=5$}&&\multicolumn{2}{|c|}{$k=10$}\\
  \cline{1-3}\cline{5-6}\cline{8-9}
  \backslashbox{$\gamma$}{$\underline{M}_T^*$} & 100 & 500  && 100 & 500 && 100 & 500  \\
   \cline{1-3}\cline{5-6}\cline{8-9}
0.04   & 116(101) & 68(101) && 89(52) & 50(53) && 82(41) & 47(41) \\
    \cline{1-3}\cline{5-6}\cline{8-9}
0.06   &  131(101) & 101(100)  && 102(53)  &  75(53) && 95(42)  & 70(41) \\
   \cline{1-3}\cline{5-6}\cline{8-9}
0.08  & 139(100) & 114(100) && 110(52) & 87(52) && 103(41) & 80(42) \\
    \cline{1-3}\cline{5-6}\cline{8-9}
0.1   & 144(100)  & 122(99)  && 115(52)  &  94(52) && 107(42)  & 87(42) \\
    \cline{1-3}\cline{5-6}\cline{8-9}
   \hline
\multicolumn{9}{|c|}{The case of periodic pulse release}\\
\hline
 \cline{1-3}\cline{5-6}\cline{8-9}
\multicolumn{3}{|c|}{$k=2$}&&\multicolumn{2}{|c|}{$k=5$}&&\multicolumn{2}{|c|}{$k=10$}\\
   \cline{1-3}\cline{5-6}\cline{8-9}
  \backslashbox{$\gamma$}{$\underline{M}_T^*$} & 100 & 500  && 100 & 500 && 100 & 500 \\
   \cline{1-3}\cline{5-6}\cline{8-9}
0.04   & 97(69) & 56(64) && 84(43) & 48(43)  && 80(37) & 46(37) \\
   \cline{1-3}\cline{5-6}\cline{8-9}
   0.06   & 111(64)  & 83(64)  && 97(43)  & 71(43)  && 93(37)  & 68(37) \\
   \cline{1-3}\cline{5-6}\cline{8-9}
0.08  & 119(64) & 95(64) && 104(44) & 82(43) && 100(38) & 78(38) \\
   \cline{1-3}\cline{5-6}\cline{8-9}
0.1   & 124(63)  & 102(64)  && 109(43)  &  89(43)  && 105(37)  & 85(37) \\
   \cline{1-3}\cline{5-6}\cline{8-9}
\hline
\end{tabular}
\caption{Combination of adulticide and $20\%$ of MC, followed by SIT. Numerical estimates of the minimal times (in days) to reach $\underline{Y}$, using massive releases, $M_T^*=k\times M_{T_1}$.  The values in the brackets indicate the gain in days compared to SIT alone.}\label{Table-time-MC-Adulticide-stop-1}
\end{table}

\begin{table}[H]
  \centering
\begin{tabular}{|c|c|c|c|c|c|c|c|c|}
\hline
\multicolumn{9}{|c|}{The case of continuous and constant release}\\
\hline
 \cline{1-3}\cline{5-6}\cline{8-9}
\multicolumn{3}{|c|}{$k=2$}&&\multicolumn{2}{|c|}{$k=5$}&&\multicolumn{2}{|c|}{$k=10$}\\
  \cline{1-3}\cline{5-6}\cline{8-9}
  \backslashbox{$\gamma$}{$\underline{M}_T^*$} & 100 & 500  && 100 & 500 && 100 & 500  \\
   \cline{1-3}\cline{5-6}\cline{8-9}
0.04   & 107(110) & 59(110) && 85(56) & 46(57) && 79(44) & 43(45) \\
    \cline{1-3}\cline{5-6}\cline{8-9}
0.06   &  123(109) & 93(108)  && 98(57)  &  71(57) && 92(45)  & 66(45) \\
   \cline{1-3}\cline{5-6}\cline{8-9}
0.08  & 132(107) & 107(107) && 106(56) & 83(56) && 99(45) & 77(45) \\
    \cline{1-3}\cline{5-6}\cline{8-9}
0.1   & 137(107)  & 114(107)  && 110(57)  &  90(56) && 104(45)  & 84(45) \\
    \cline{1-3}\cline{5-6}\cline{8-9}
   \hline
\multicolumn{9}{|c|}{The case of periodic pulse release}\\
\hline
 \cline{1-3}\cline{5-6}\cline{8-9}
\multicolumn{3}{|c|}{$k=2$}&&\multicolumn{2}{|c|}{$k=5$}&&\multicolumn{2}{|c|}{$k=10$}\\
   \cline{1-3}\cline{5-6}\cline{8-9}
  \backslashbox{$\gamma$}{$\underline{M}_T^*$} & 100 & 500  && 100 & 500 && 100 & 500 \\
   \cline{1-3}\cline{5-6}\cline{8-9}
0.04   & 92(74) & 50(70) && 80(47) & 44(47)  && 77(40) & 42(41) \\
   \cline{1-3}\cline{5-6}\cline{8-9}
   0.06   & 106(69)  & 78(69)  && 93(47)  & 67(47)  && 90(40)  & 64(41) \\
   \cline{1-3}\cline{5-6}\cline{8-9}
0.08  & 114(69) & 90(69) && 101(47) & 78(47) && 97(41) & 75(41) \\
   \cline{1-3}\cline{5-6}\cline{8-9}
0.1   & 118(69)  & 97(69)  && 105(47)  &  85(47)  && 102(40)  & 81(41) \\
   \cline{1-3}\cline{5-6}\cline{8-9}
\hline
\end{tabular}
\caption{Combination of adulticide and $40\%$ of MC, followed by SIT. Numerical estimates of the minimal times (in days) to reach $\underline{Y}$, using massive releases, $M_T^*=k\times
  M_{T_1}$.  The values in the brackets indicate the gain in days compared to SIT alone.}\label{Table-time-MC-Adulticide-stop-2}
\end{table}

\section{Conclusion}\label{conclusion}
In this manuscript we complete the work done in \cite{Strugarek2019}, when no Allee effect can be exploit in the SIT treatment strategy. We know that without Allee effect, a permanent SIT treatment is necessary to maintain the population under a certain threshold and eventually to drive it to extinction, at a minimal cost. Last but not least, we also prove that reducing the size of the mosquito population using adulticide or using a seasonality effect, really improve the efficiency of SIT, and, in addition its cost. Mechanical control, while useful to reduce the wild population when no other treatments are available, can help to improve SIT, but, in fact, its impact seems to be limited, such that, in terms of cost, it seems not to be necessary or useful. In addition, we know that mechanical control is difficult to maintain, and eventually can have a negative effect \cite{DT2016,TD2019}.

Based on this work, several further extensions are possible: couple the SIT model with an epidemiological model, to derive, for instance, the (epidemiological) threshold value to reach, in order to reduce the epidemiological risk, as it was done in \cite{Dumont2012}; take into account the costs of the different treatments in order to derive the most interesting combinations from the economical point of view; another enhancement would be to take into account the spatial component. Last but not least, comparison and links with SIT field experiments, against mosquitoes or fruit flies, are needed to enhance the model and also SIT control strategies.

\vspace{0.5cm}
\section*{Acknowledgments}
This study is part of the Phase 2B {\color{black} "SIT feasibility project against \textit{Aedes albopictus} in Reunion Island"}, jointly funded by the French Ministry of Health and the European Regional Development Fund (ERDF). All authors were (partially) supported by the DST/NRF SARChI Chair  in Mathematical Models and Methods in Biosciences and Bioengineering at the University of Pretoria (grant 82770). YD is also partially supported by the GEMDOTIS project, funded by the call ECOPHYTO 2018 (Action 27).

\bibliographystyle{plain}
\bibliography{bibliography}

\begin{thebibliography}{10}

\bibitem{Anguelov2019}
R.~Anguelov, R.~Bekker, and Y.~Dumont.
\newblock Bi-stable dynamics of a host-pathogen model.
\newblock {\em Biomath}, 8:1--17, 2019.

\bibitem{Anguelov2012TIS}
R.~Anguelov, Y.~Dumont, and J.M. Lubuma.
\newblock Mathematical modeling of sterile insect technology for control of
  anopheles mosquito.
\newblock {\em Comput. Math. Appl.}, 64:374--389, 2012.

\bibitem{BP1993}
D.~Bainov and P.~Simeonov.
\newblock {\em Impulsive Differential Equations: Periodic Solutions and
  Applications}, volume~66.
\newblock CRC Press, 1993.

\bibitem{Bliman2019}
Pierre-Alexandre Bliman, Daiver Cardona-Salgado, Yves Dumont, and Olga
  Vasilieva.
\newblock Implementation of control strategies for sterile insect techniques.
\newblock {\em Mathematical Biosciences}, 314:43 -- 60, 2019.

\bibitem{Chitnis2008}
Nakul Chitnis, James~M. Hyman, and Jim~M. Cushing.
\newblock Determining important parameters in the spread of malaria through the
  sensitivity analysis of a mathematical model.
\newblock {\em Bulletin of Mathematical Biology}, 70(5):1272, Feb 2008.

\bibitem{dufourd2013}
C.~Dufourd and Y.~Dumont.
\newblock Impact of environmental factors on mosquito dispersal in the prospect
  of sterile insect technique control.
\newblock {\em Comput. Math. Appl.}, 66(9):1695--1715, 2013.

\bibitem{Dumont2008}
Y.~Dumont, F.~Chiroleu, and C.~Domerg.
\newblock On a temporal model for the chikungunya disease: Modeling, theory and
  numerics.
\newblock {\em Mathematical Biosciences}, 213(1):80 -- 91, 2008.

\bibitem{Dumont2012}
Y.~Dumont and J.~M. Tchuenche.
\newblock mathematical studies on the sterile insect technique for the
  chikungunya disease and aedes albopictus.
\newblock {\em J. Math. Biol.}, 65 (5):809--854, 2012.

\bibitem{DT2016}
Y.~Dumont and J.~Thuilliez.
\newblock Human behaviors: A threat to mosquito control?
\newblock {\em Mathematical Biosciences}, 281(Supplement C):9 -- 23, 2016.

\bibitem{Dumont2010}
Yves Dumont and Frederic Chiroleu.
\newblock Vector control for the chikungunya disease.
\newblock {\em Mathematical Biosciences \& Engineering}, 7:313--330, 2010.

\bibitem{SIT}
V.~A. Dyck, J.~Hendrichs, and A.~S. Robinson.
\newblock {\em The Sterile Insect Technique, Principles and Practice in
  Area-Wide Integrated Pest Management}.
\newblock Springer, Dordrecht, 2006.

\bibitem{Guiver2016}
C.~Guiver, D.~Hodgson, and S.~Townley.
\newblock A note on the eigenvectors of perturbed matrices with applications to
  linear positive systems.
\newblock {\em Linear Algebra and its Applications}, 509:143--167, 2016.

\bibitem{Haddad2010}
W.M. Haddad, V.~Chellaboina, and Q.~Hui.
\newblock {\em Nonnegative and Compartmental Dynamical Systems}.
\newblock Princeton Uninersity Press, 2010.

\bibitem{Kirkilionis2004}
Markus Kirkilionis and Sebastian Walcher.
\newblock On comparison systems for ordinary differential equations.
\newblock {\em Journal of Mathematical Analysis and Applications}, 299(1):157
  -- 173, 2004.

\bibitem{legoff2019}
Gilbert Le~Goff, David Damiens, Abdoul-Hamid Ruttee, Laurent Payet, Cyrille
  Lebon, Jean-S{\'e}bastien Dehecq, and Louis-Cl{\'e}ment Gouagna.
\newblock Field evaluation of seasonal trends in relative population sizes and
  dispersal pattern of aedes albopictus males in support of the design of a
  sterile male release strategy.
\newblock {\em Parasites \& Vectors}, 12(1):81, 2019.

\bibitem{Ross1910}
R.~Ross.
\newblock {\em The Prevention of Malaria, 1st edn.}
\newblock John Murray, London, 1910.

\bibitem{Ross1911}
R.~Ross.
\newblock {\em The Prevention of Malaria, 2nd edn.}
\newblock John Murray, London, 1911.

\bibitem{Sinkin2004}
Steven~P Sinkins.
\newblock Wolbachia and cytoplasmic incompatibility in mosquitoes.
\newblock {\em Insect Biochemistry and Molecular Biology}, 34(7):723 -- 729,
  2004.
\newblock Molecular and population biology of mosquitoes.

\bibitem{Smith2008}
H.~Smith.
\newblock {\em Monotone dynamical systems: An introduction to the theory of
  competitive and cooperative systems}, volume~4.
\newblock American Mathematical Society, 2008.

\bibitem{Strugarek2019}
Martin Strugarek, Herv\'e Bossin, and Yves Dumont.
\newblock On the use of the sterile insect release technique to reduce or
  eliminate mosquito populations.
\newblock {\em Applied Mathematical Modelling}, 68:443 -- 470, 2019.

\bibitem{TD2019}
J.~Thuilliez and Y.~Dumont.
\newblock Public mosquito abatement: A cluster randomized experiment.
\newblock {\em The World Bank Economic Review}, 33(2):479 -- 497, 2019.

\bibitem{Walter1998}
W.~Walter.
\newblock {\em Ordinary Differential Equations}.
\newblock Springer, 1998.

\end{thebibliography}

\end{document}